\newtheorem{thm}{Theorem}
\newtheorem{lem}[thm]{Lemma}
\newtheorem{example}[thm]{Example}
\newtheorem{assumption}[thm]{Assumption}
\newtheorem{rmk}[thm]{Remark}
\newtheorem{definition}[thm]{Definition}
\newtheorem{corollary}[thm]{Corollary}
\newtheorem{proposition}[thm]{Proposition}
\newproof{pf}{Proof}
\numberwithin{thm}{section}
\newcommand {\rank}     {\mathop{\rm rank}\nolimits}
\newcommand {\range}  {\mathop{\rm range}\nolimits}
\newcommand {\vspan}  {\mathop{\rm span}\nolimits}
\newcommand {\kernel}   {\mathop{\rm kernel}\nolimits}
\newcommand{\const}{\mbox{\rm const.}}
\newcommand{\diag}{\mbox{\rm diag}}
\def\leq{\leqslant}
\def\r{\hro{R}}
\def\hro{\mathbb}
\def\N{\hro{N}}
\def\a{\alpha}
\def\vphi{\varphi}
\def\De{\Delta}
\def\Si{\Sigma}
\def\tA{\tilde{A}}
\def\hA{\hat{A}}
\def\tB{\tilde{B}}
\def\tD{\tilde{D}}
\def\hB{\hat{B}}
\def\hr{\hat{r}}
\def\hv{\hat{v}}
\def\tr{\tilde{r}}
\def\tv{\tilde{v}}
\def\cM{\mathcal M}
\def\tcM{\tilde{\mathcal M}}
\def\cN{\mathcal N}
\def\cX{\mathcal X}
\def\cU{\mathcal U}
\def\hD{\hat{D}}
\def\tM{\tilde{M}}
\def\tC{\tilde{C}}
\def\hC{\hat{C}}
\def\hD{\hat{D}}
\def\hG{\hat{G}}
\def\cG{{\cal G}}
\def\hg{\hat{g}}
\def\cW{{\cal W}}
\def\tf{\tilde{f}}
\def\hf{\hat{f}}
\def\chA{\widecheck{A}}
\def\chB{\widecheck{B}}
\def\chC{\widecheck{C}}
\def\chD{\widecheck{D}}
\def\chG{\widecheck{G}}
\def\chU{\widecheck{U}}
\def\lsim{\overset{\ell}{\sim}}
\newcommand{\Jbone}{	J_{\textrm{\tiny \textsc{B1}}}	}
\newcommand{\Jbtwo}{	J_{\textrm{\tiny \textsc{B2}}}	}
\newcommand{\Jcone}{	J_{\textrm{\tiny \textsc{C1}}}	}
\newcommand{\Jctwo}{	J_{\textrm{\tiny \textsc{C2}}}	}
\newcommand{\Jd}{	J_{\textrm{\tiny \textsc{D}}}	}
\def\be{\begin{equation}}
\def\ee{\end{equation}}         
\newcommand{\ben}{\begin{eqnarray}}
\newcommand{\een}{\end{eqnarray}}
\newcommand{\bea}{\begin{align}}
\newcommand{\eea}{\end{align}}
\newcommand{\bsen}{\begin{subeqnarray}}
\newcommand{\esen}{\end{subeqnarray}}
\newcommand{\bens}{\begin{eqnarray*}}
\newcommand{\eens}{\end{eqnarray*}}
\def\bc{\begin{cases}}
\def\ec{\end{cases}}
\newcommand{\bsq}{\begin{subequations}}
\newcommand{\esq}{\end{subequations}}
\newcommand{\m}[1]{
	\begin{bmatrix}
		#1 
	\end{bmatrix}
}
\renewcommand{\pm}[1]{
	\begin{matrix}
		#1 
	\end{matrix}
}
\newcommand{\sm}[1]{
	\left[
	\begin{smallmatrix}
		#1 
	\end{smallmatrix}
	\right]
}
\newcommand\undermat[2]{%
	\makebox[0pt][l]{$\smash{\underbrace{\phantom{%
					\begin{matrix}#2\end{matrix}}}_{\text{$#1$}}}$}#2}
\begin{document}

\begin{frontmatter}

\title{Index Reduction for Second Order Singular Systems \\of Difference Equations}

\author[add1]{Vu Hoang Linh\corref{cor1}} 
\ead{linhvh@vnu.edu.vn}

\author[add1]{Ha Phi}
\ead{haphi.hus@vnu.edu.vn}

\address[add1]{Faculty of Mathematics, Mechanics, and Informatics, Vietnam National University, 334, Nguyen Trai, Thanh Xuan, Hanoi, Vietnam.}

\cortext[cor1]{Corresponding author}

\begin{abstract}
This paper is devoted to the analysis of linear second order \emph{discrete-time descriptor systems} (or singular difference equations (SiDEs) with control). 
Following the algebraic approach proposed by Kunkel and Mehrmann for pencils of matrix valued functions, first we present a theoretical framework based on a procedure of reduction to analyze solvability of initial value problems for SiDEs, which is followed by the analysis of descriptor systems.
We also describe methods to analyze structural properties related to the solvability analysis of these systems. Namely, two numerical algorithms for reduction to the so-called \emph{strangeness-free forms} are presented. Two associated index notions are also introduced and discussed.  
This work extends and complements some recent results for high order continuous-time descriptor systems and first order discrete-time descriptor systems.
\end{abstract}
	

\begin{keyword}
Singular system \sep Second order difference equation \sep Descriptor system \sep Strangeness-index \sep Index reduction \sep Regularization.

\MSC 15A23\sep 39A05\sep 39A06\sep 93C05
\end{keyword}

\end{frontmatter}

\section{Introduction}\label{intro}

In this paper we study second order discrete-time descriptor systems of the form
\begin{equation}\label{eq1.1}
A_n x(n+2) + B_{n} x(n+1) + C_{n} x(n) + D_{n} u(n)  = f(n)\ \mbox{ for all } n \geq n_0. 
\end{equation}
We will also discuss the initial value problem of the associated singular difference equation (SiDE) 
\begin{equation}\label{eq1.2}
A_n x(n+2) + B_{n} x(n+1) + C_{n} x(n) = f(n)\ \mbox{ for all } n \geq n_0,
\end{equation}
together with some given initial conditions
\begin{equation}\label{eq1.3}
x(n_0+1)=x_{1}, \ x(n_0)=x_{0}.
\end{equation}
Here the solution/state $x=\{ x(n) \}_{n\geq n_0}$, the inhomogeneity $f=\{ f(n) \}_{n\geq n_0}$, the input $u=\{ u(n) \}_{n\geq n_0}$, where $x(n) \in \r^d$, $f(n) \in \r^m$ and $u(n) \in \r^p$ for each $n \geq n_0$. Three matrix sequences $\{A_{n}\}_{n\geq n_0}$, $\{B_{n}\}_{n\geq n_0}$, $\{C_{n}\}_{n\geq n_0}$ take values in $\r^{m,d}$, and $\{D_{n}\}_{n\geq n_0}$ takes values in $\r^{m,p}$. 
We notice that all the results in this paper also can be carried over to the complex case and they can also be easily extended to systems of higher order. However, for sake of simplicity and because this is the most important case in practice, we restrict ourselves to the case of real and second order systems.

The SiDE \eqref{eq1.2}, on one hand, can be considered as the resulting equation obtained by finite difference or discretization of some continuous-time DAEs or constrained PDEs. On the other hand, there are also many models/applications in real-life, which lead to SiDEs, for example Leotief economic models, biological backward Leslie model, etc, see e.g. \cite{Aga00,Ela13,Kel01,Lue79}. 

While both DAEs and SiDEs of first order have been well-studied from both theoretical and numerical points of view,
the same maturity has not been reached for higher order systems. 
In the classical literature for regular difference equations, e.g. \cite{Aga00,Ela13,Kel01}, usually new variables are introduced to represent some chosen derivatives of the state variable $x$ such that a high order system can be reformulated as a first order one. Unfortunately for singular systems, this approach may induce some substantial disadvantages. As have been fully discussed in \cite{LosM08,MehS06} for continuous-time systems, these disadvantages include: (1st) increase the index of the singular system, and therefore the complexity of a numerical method to solve it; (2nd) increase the computational effort due to the bigger size of a new system; (3rd) affect the controllability/observability of the corresponding descriptor system since there exist situations where a new system is uncontrollable while the original one is. 
Therefore, the \emph{algebraic approach}, which treats the system directly without reformulating it, has been presented in \cite{LosM08,MehS06,Wun06,Wun08} in order to overcome the disadvantages mentioned above.
Nevertheless, even for second order SiDEs, this method has not yet been considered.

Another motivation of this work comes from recent research on the stability analysis of high order discrete-time systems with time-dependent coefficients \cite{LinNT16,MehT15}. In these works, systems are supposed to be given in either strangeness-free form or linear state-space form. This, however, is not always the case in applications, and hence, a reformulation procedure would be required.


%
%
Therefore, the main aim of this article is to set up a comparable framework for second order SiDEs and for discrete-time descriptor systems as well. It is worth marking that the algebraic method proposed in \cite{LosM08,MehS06} is applicable theoretically but not numerically due to two reasons: (1st) The condensed forms of the matrix coefficients are really big and complicated; (2nd) The system's transformations are not orthogonal, and hence, not numerically stable. In this work, we will modify this method to make it more concise and also computable in a stable way.

The outline of this paper is as follows. After giving some auxiliary results in Section \ref{pre}, in Sections \ref{Sec2} and \ref{Sec3} we consecutively introduce \emph{index reduction procedures} for SiDEs and for descriptor systems. 
A desired \emph{strangeness-free form} and a constructive algorithm to get it will be presented in Theorem \ref{thm2} and Algorithm \ref{Alg1} (Section \ref{Sec2}). 
A resulting system from this algorithm allows us to fully analyze structural properties such as existence and uniqueness of a solution, consistency and hidden constraints, etc. 
For descriptor systems, where feedback also takes part in the regularization/solution procedure, besides the strangeness-free form presented in Theorem \ref{thm4}, regularization via first order feedback is discussed in Theorem \ref{pro5.1}.  
In order to get stable numerical solutions of these systems, in Section \ref{Sec4} we study the \emph{difference array approach} in Algorithm \ref{Alg2} and Theorem \ref{thm6} aiming at bringing out the strangeness-free form of a given system. 
Finally, we finish with some conclusions.\\

\section{Preliminaries} \label{pre}
In the following example we demonstrate some difficulties that may arise in the analysis of second order SiDEs.
\begin{example}\label{Exa1}
\rm{	
	Consider the following second order descriptor system, motivated from Example 2, \cite{MehS06}.
	\be\label{eq1.4}
	\m{1 & 0 \\ 0 & 0} x(n\!+\!2) \!+\! \m{1 & 0 \\ 0 & 0} x(n\!+\!1) \!+\! \m{0 & 1 \\ 1 & 0} x(n) \!-\! \m {1 \\ 1} u(n) \!= \m{f_1(n) \\ f_2(n)}, \ n\geq n_0.  
	\ee
	Clearly, from the second equation $\m{1 & 0} x(n) = u(n) + f_2(n)$, we can shift the time $n$  forward to obtain 
	\[
	\m{1 & 0} x(n+1) = u(n+1) + f_2(n+1)  \ \mbox{ and } \ \m{1 & 0} x(n+2) = u(n+2) + f_2(n+2).
	\]
	Inserting these into the first equation of \eqref{eq1.4}, we find out the hidden constraint 
	\[
	f_2(n+2) + u(n+2) + f_2(n+1) + u(n+1) + \m{0 & 1} x(n) = f_1(n) \ .
	\]
	Consequently, we deduce the following system, which possess a unique solution
	\[
	\m{0 & 1 \\ 1 & 0} x(n) = \m{f_1(n) - f_2(n+2) - f_2(n+1) - u(n+2) - u(n+1)  \\ u(n) + f_2(n)}, \ n\geq n_0.
	\]
	Let $n=n_0$ in this new system, we obtain a constraint that $x(n_0)$ must obey.
	This example showed us some important facts. Firstly, one can use some shift operators and row-manipulation (Gaussian eliminations) to derive hidden constraints. Secondly, a solution only exists if initial conditions and an input fulfill certain consistency conditions. Finally, in this example the solution depends on the future input. This property is called \emph{non-causality} and cannot happen in the case of regular difference equations. 
}
\end{example}

For matrices $Q\in \r^{q,d}$, $P\in\r^{p,d}$, the pair $(Q,P)$ is said to
\emph{have no hidden redundancy} if
\[
\rm{rank} \left(\m{Q \\ P} \right) = \rm{rank} (Q) + \rm{rank}(P).
\]
Otherwise, $(Q,P)$ is said to \emph{have hidden redundancy}.
The geometrical meaning of this concept is that the intersection space $\vspan(P^T) \cap \vspan(Q^T)$ contains only the zero-vector $0$. Here, for any given matrix $M$, by $M^T$ we denote its transpose. We denote by $\vspan(P^T)$ (resp., $\vspan(Q^T)$) the real vector space spanned by the rows of $P$ (resp., rows of $Q$). 

%
%
\begin{lem}\label{lem1.3}(\cite{HaM12})
Consider $k+1$ full row rank matrices $R_0 \in \r^{r_0,d}$, $R_1 \in \r^{r_1,d}\dots, R_k \in \r^{r_k,d}$, and assume that for $j=k,\dots,1$ none of the matrix pairs \linebreak $\left(R_j, \m{R^T_{j-1} \ \hdots \ R^T_0}^T \right)$ has a hidden redundancy. Then $\m{R^T_k \ \hdots \ R^T_0}^T$ has full row rank.
\end{lem}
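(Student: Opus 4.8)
The natural approach is induction on $k$, stacking the matrices one row-block at a time from the bottom. For $k=0$ there is nothing to prove: $R_0$ has full row rank by hypothesis. For the inductive step, suppose we have already shown that $S_{k-1} := \m{R^T_{k-1} \ \hdots \ R^T_0}^T$ has full row rank, i.e. $\rank(S_{k-1}) = r_{k-1} + \cdots + r_0$. We must show that $S_k := \m{R^T_k \ S_{k-1}^T}^T$ has full row rank, namely $r_k + r_{k-1} + \cdots + r_0$.

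The key step is to invoke the ``no hidden redundancy'' hypothesis for the pair $\left(R_k, S_{k-1}\right)$, which by definition gives
\[
\rank(S_k) = \rank\!\left(\m{R_k \\ S_{k-1}}\right) = \rank(R_k) + \rank(S_{k-1}).
\]
Now $\rank(R_k) = r_k$ since $R_k$ has full row rank by hypothesis, and $\rank(S_{k-1}) = r_{k-1} + \cdots + r_0$ by the induction hypothesis. Adding these yields $\rank(S_k) = r_k + r_{k-1} + \cdots + r_0$, which is exactly the number of rows of $S_k$, so $S_k$ has full row rank. This closes the induction.

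The only point requiring a little care is lining up the indexing so that the hypothesis ``none of the pairs $\left(R_j, \m{R^T_{j-1} \ \hdots \ R^T_0}^T\right)$ for $j = k, \dots, 1$ has hidden redundancy'' is applied at the correct stage: at the step passing from $S_{j-1}$ to $S_j$ one uses precisely the pair indexed by $j$, and these range over $j = 1, \dots, k$, matching the stated assumption. I do not anticipate a genuine obstacle here; the statement is essentially a telescoping of the defining rank identity for ``no hidden redundancy,'' and the main thing to get right is the bookkeeping of which pair is used when, together with noting that each $R_j$ individually contributes its full row count $r_j$.
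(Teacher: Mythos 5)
Your proof is correct: the statement is exactly a telescoping of the defining rank identity for ``no hidden redundancy,'' and your induction on $k$ carries it through without any gap. The paper itself gives no proof for this lemma (it is quoted from \cite{HaM12}), and your argument is the standard one that the cited reference uses, so there is nothing further to reconcile.
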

Lemma \ref{lem1.2} below will be very useful later for our analysis, in order to remove hidden redundancy in the coefficients of \eqref{eq1.2}.

\begin{lem}\label{lem1.2} Consider two matrix sequences $\{P_{n}\}_{n\geq n_0}$, $\{Q_{n}\}_{n\geq n_0}$ which take values in $\r^{p,d}$ and $\r^{q,d}$, respectively. Furthermore, assume that they satisfy the constant rank assumptions
	\[
	\rank \left(Q_n\right) = r_Q , \ \mbox{ and } \rank \left(\sm{ P_n \\ Q_n } \right) = r_{[P;Q]} \ \mbox{ for all } n\geq n_0 \ .
	\]
	Then there exists a matrix sequence $\Big\{ \sm{S_n & 0 \\ Z^{(1)}_{n} & Z^{(2)}_{n}} \Big\}_{n\geq n_0}$ in $\r^{p,p+q}$ such that the following conditions hold.
	\begin{enumerate}
		\item[i)] $S_n \in \r^{r_{[P;Q]}-r_Q, \ p}$, $Z^{(1)}_{n} \in \r^{p-r_{[P;Q]}+r_Q, \ p}$, $Z^{(2)}_{n} \in \r^{p-r_{[P;Q]}+r_Q, \ q}$,
		\item[ii)] $\sm{S_n \\ Z^{(1)}_{n}} \in \r^{p,p} $ is orthogonal, and $Z^{(1)}_{n} P_{n} + Z^{(2)}_{n} Q_{n} = 0$,
		\item[iii)] $S_n P_{n}$ has full row rank, and the pair $(S_n P_{n},Q_{n})$ has no hidden redundancy.
	\end{enumerate}
\end{lem}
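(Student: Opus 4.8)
The plan is to construct the three blocks separately for each index $n$ by a short chain of rank-revealing orthogonal factorizations, and then to observe that the two constant rank hypotheses force the row/column sizes appearing in (i) to be independent of $n$, so that the pointwise objects glue into genuine matrix sequences (no regularity in $n$ is needed, the index set being discrete). So fix $n\geq n_0$. First I would apply an orthogonal column transformation $T_n\in\r^{d,d}$, obtained for instance from a singular value decomposition of $Q_n$, which exposes the row space of $Q_n$ in the leading $r_Q$ columns, i.e.\ $Q_nT_n=\m{\bar Q_n & 0}$ with $\bar Q_n\in\r^{q,r_Q}$ of full column rank, and I would partition $P_nT_n=\m{P_n^{(1)} & P_n^{(2)}}$ conformally. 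Replacing $P_n^{(1)}$ by $P_n^{(1)}-P_n^{(1)}\bar Q_n^{+}\bar Q_n=0$ (an invertible row operation, since $\bar Q_n^{+}\bar Q_n=I_{r_Q}$) brings $\sm{P_nT_n \\ Q_nT_n}$ to $\sm{0 & P_n^{(2)} \\ \bar Q_n & 0}$, whence $\rank(P_n^{(2)})=r_{[P;Q]}-r_Q$. Next I would take a rank-revealing orthogonal factorization of $P_n^{(2)}$, i.e.\ an orthogonal $U_n\in\r^{p,p}$ with $U_nP_n^{(2)}=\m{\hat P_n^{(2)} \\ 0}$ and $\hat P_n^{(2)}\in\r^{\,r_{[P;Q]}-r_Q,\,d-r_Q}$ of full row rank, and I would let $S_n$ consist of the first $r_{[P;Q]}-r_Q$ rows of $U_n$ and $Z_n^{(1)}$ of the remaining $p-r_{[P;Q]}+r_Q$ rows; this is the orthogonal matrix of (ii), with the sizes of (i).

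It remains to produce $Z_n^{(2)}$ and to check (iii). Since $Z_n^{(1)}P_n^{(2)}=0$, undoing the rotation gives $Z_n^{(1)}P_n=\m{Z_n^{(1)}P_n^{(1)} & 0}T_n^{T}$, and the rows of this matrix lie in $\vspan(Q_n^{T})$ because in the $T_n$-frame that row space is precisely the coordinate subspace carried by the first $r_Q$ columns (here full column rank of $\bar Q_n$ is used); concretely $Z_n^{(2)}:=-Z_n^{(1)}P_n^{(1)}\bar Q_n^{+}$ has the format of (i) and satisfies $Z_n^{(1)}P_n+Z_n^{(2)}Q_n=0$. For (iii), in the same coordinates $S_nP_n=\m{S_nP_n^{(1)} & \hat P_n^{(2)}}T_n^{T}$, so $\rank(S_nP_n)=\rank\m{S_nP_n^{(1)} & \hat P_n^{(2)}}\geq\rank(\hat P_n^{(2)})=r_{[P;Q]}-r_Q$, which is already its number of rows, hence $S_nP_n$ has full row rank. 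Finally, the same elimination of the $(1)$-block applied to $\sm{S_nP_n \\ Q_n}$ (after the column rotation by $T_n$) produces $\sm{0 & \hat P_n^{(2)} \\ \bar Q_n & 0}$, whose rank equals $\rank(\bar Q_n)+\rank(\hat P_n^{(2)})=r_Q+(r_{[P;Q]}-r_Q)=\rank(Q_n)+\rank(S_nP_n)$; thus $(S_nP_n,Q_n)$ has no hidden redundancy. Carrying out this construction for every $n\geq n_0$ and invoking the constant rank assumptions (so that $r_Q$, $r_{[P;Q]}$, and hence all block sizes, do not vary with $n$) yields the asserted sequences.

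I do not anticipate a genuine obstacle here: the argument is elementary linear algebra, and the only places that call for care are the bookkeeping of block sizes and ranks and the correct identification of $\vspan(Q_n^{T})$ in the rotated frame, which is exactly what guarantees the solvability of $Z_n^{(1)}P_n+Z_n^{(2)}Q_n=0$ for $Z_n^{(2)}$. As an alternative, one may proceed coordinate-freely: choose the rows of $Z_n^{(1)}$ (resp.\ $S_n$) to be an orthonormal basis of $W_n:=\{v\in\r^{p}:v^{T}P_n\in\vspan(Q_n^{T})\}$ (resp.\ of $W_n^{\perp}$) and set $Z_n^{(2)}:=-Z_n^{(1)}P_nQ_n^{+}$; then $\dim W_n=p-r_{[P;Q]}+r_Q$ gives (i), and both claims in (iii) reduce to $W_n\cap W_n^{\perp}=\{0\}$ together with the rank identity above.
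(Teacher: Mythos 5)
Your proof is correct. The paper itself does not spell out an argument --- it simply defers to Lemma 2.7 of \cite{HaMS14} --- but your construction (an orthogonal column transformation exposing $\kernel(Q_n)$ so that $P_n^{(2)}$ is the restriction of $P_n$ to that kernel with $\rank(P_n^{(2)})=r_{[P;Q]}-r_Q$, a second orthogonal factorization splitting $\r^{p}$ into the $S_n$- and $Z_n^{(1)}$-parts, and the explicit solution $Z_n^{(2)}=-Z_n^{(1)}P_n^{(1)}\bar Q_n^{+}$ of $Z_n^{(1)}P_n+Z_n^{(2)}Q_n=0$ via the full column rank of $\bar Q_n$) is precisely the standard compression argument used there, and all block sizes and rank identities in (i)--(iii) check out.
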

\begin{pf} Since the proof is essentially the same as in the continuous-time case, we refer the interested readers to the proof of Lemma 2.7, \cite{HaMS14}.
\end{pf}

\begin{rmk}\label{rem1}
\rm{	
	i) In the special case, where $P_n$ has full row rank and the pair $(P_n,Q_n)$ has no hidden redundancy, we will adapt the notation of an empty matrix and take 
	$S_n=I_p$, $Z^{(1)}_{n} = [ \ ]_{0,p}$, $Z^{(2)}_{n}=[ \ ]_{0,q}$. \\
	ii) Furthermore, we notice, that whenever the smallest singular value of $Q_n$ and the largest one do not differ very much in size, then we can stably compute the matrix $Z^{(2)}_{n}$. Both matrices $Z^{(1)}_{n}$ and $Z^{(2)}_{n}$ will play the key role in our \emph{index reduction procedure} presented in the next section.
}
\end{rmk}

For any given matrix $M$, by $T_0(M)$ we denote an orthogonal matrix whose columns span the left null space of $M$. By $T_{\perp}(M)$ we denote an orthogonal matrix whose columns span the vector space $\range(M)$. 
From basic linear algebra, we have the following lemma.

\begin{lem}\label{lem1.4} The matrix $\m{T^T_{\perp}(M) \\ T^T_0(M)}$ is nonsingular, the matrix $T^T_{\perp}(M)  M$ has full row rank, and the following identity holds
	\[
	\m{T^T_{\perp}(M) \\ T^T_0(M)} M = \m{T^T_{\perp}(M) M \\ 0 }.
	\]
\end{lem}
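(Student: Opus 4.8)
The statement splits into three claims about a single matrix $M$: that the stacked orthogonal matrix $\m{T^T_{\perp}(M) \\ T^T_0(M)}$ is nonsingular, that $T^T_{\perp}(M) M$ has full row rank, and that the displayed block identity holds. The plan is to exploit the defining properties of $T_{\perp}(M)$ and $T_0(M)$: the columns of $T_\perp(M)$ form an orthonormal basis of $\range(M)$ and the columns of $T_0(M)$ form an orthonormal basis of $\kernel(M^T)=\range(M)^\perp$, the orthogonal complement of $\range(M)$. Since $\r^m=\range(M)\oplus\range(M)^\perp$, concatenating these two orthonormal bases yields an orthonormal basis of $\r^m$; hence the matrix $\m{T_\perp(M) \;\; T_0(M)}$ is an orthogonal (square) matrix, and therefore so is its transpose $\m{T^T_{\perp}(M) \\ T^T_0(M)}$, which settles the first claim.

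For the block identity, I would compute the two row-blocks of the product separately. The top block is $T^T_{\perp}(M)M$, which is exactly the first entry on the right-hand side, so nothing is to be done there. For the bottom block, $T^T_0(M) M$: every column of $M$ lies in $\range(M)$, while every column of $T_0(M)$ lies in $\range(M)^\perp$; hence each entry $(T_0(M))_{:,i}^T M_{:,j}$ is an inner product of two orthogonal vectors and vanishes, giving $T^T_0(M)M=0$. This proves the displayed identity.

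Finally, for the full-row-rank claim: the rows of $T^T_{\perp}(M)M$ are the images, under the linear map $x\mapsto x^T M$ restricted appropriately, of the columns of $T_\perp(M)$. More directly, $\rank\big(T^T_{\perp}(M)M\big)$ equals the number of rows of $T_\perp^T(M)$, namely $\dim\range(M)=\rank(M)$, provided the map has no kernel; but if $v^T T^T_\perp(M) M = 0$ then $(T_\perp(M)v)^T M=0$, so $T_\perp(M)v\in\kernel(M^T)=\range(M)^\perp$, while also $T_\perp(M)v\in\range(M)$ since $v$ is a column combination mapped through $T_\perp(M)$ whose columns span $\range(M)$; hence $T_\perp(M)v=0$, and since $T_\perp(M)$ has orthonormal columns, $v=0$. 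Thus $T^T_\perp(M)M$ has its rows linearly independent, i.e. full row rank. Alternatively, one can read this off the block identity together with the nonsingularity of the stacked matrix: nonsingularity forces the product to have rank equal to $\rank(M)$, and since the bottom block is zero, all of that rank must be carried by the top block $T^T_\perp(M)M$, which has exactly $\rank(M)$ rows, hence full row rank.

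None of the steps presents a genuine obstacle; the only point requiring a little care is making explicit the orthogonal-decomposition fact $\range(M)^\perp=\kernel(M^T)$ that underlies both the nonsingularity and the vanishing of the off-diagonal block, and being consistent about the convention (columns versus rows, transpose placement) under which $T_0$ and $T_\perp$ are defined.
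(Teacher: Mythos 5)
Your proof is correct, and it is the standard argument: the paper itself gives no proof of this lemma beyond a citation to Golub--Van Loan, and the facts you use (that $\range(M)\oplus\kernel(M^T)=\r^m$ makes the stacked matrix orthogonal, that $T_0^T(M)M=0$ by orthogonality, and the kernel argument for full row rank of $T_\perp^T(M)M$) are exactly the ones that reference supplies.
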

\begin{pf}
	A simple proof can be found, for example, in \cite{GolV96}.
\end{pf}

\section{Strangeness-index of second order SiDEs}\label{Sec2}

In this section, we study the solvability analysis of the second order SiDE \eqref{eq1.2} and that of its corresponding IVP \eqref{eq1.2}--\eqref{eq1.3}. Many regularization procedures and their associated index notions have been proposed for first order systems, see the survey \cite{Meh13} and the references therein. Nevertheless, for high order systems, only the strangeness-index has been proposed in the continuous-time case in \cite{MehS06,Wun08}. 
Thus, it is our purpose to construct a comparable regularization and index concept for discrete-time system \eqref{eq1.2}. 

Let
\begin{equation*}
M_n \!:= \m{A_{n} & B_{n} & C_{n}}, \
X(n) \!:=\! \m{x(n+2) \\ x(n+1) \\ x(n)},
\end{equation*}
we call $\{M_n\}_{n\geq n_0}$ the \emph{behavior matrix sequence} of system  \eqref{eq1.2}. Thus,   \eqref{eq1.2} can be rewritten as
\begin{equation}\label{eq2.2}
M_n X(n) = f(n)\ \mbox{ for all } n\geq n_0.
\end{equation}
Clearly, by scaling  \eqref{eq1.2} with a pointwise nonsingular matrix sequence $\{P_n\}_{n\geq n_0}$ in $\r^{m,m}$, we obtain a new system
\begin{equation}\label{eq2.3}
\m{P_nA_{n} & P_nB_{n} & P_nC_{n}} X(n) = P_n f(n)\ \mbox{ for all } n\geq n_0,
\end{equation}
without changing the solution space. This motivates the following definition.
\begin{definition}\label{defstrleq}	
\rm{
Two behavior matrix sequences $\{M_n =\m{A_{n} & B_{n} & C_{n}} \}_{n\geq n_0}$ and $\{\tM_n =\m{\tA_{n} & \tB_{n} & \tC_{n}}\}_{n\geq n_0}$ are called \emph{(strongly) left equivalent} if there exists a pointwise nonsingular matrix sequence $\{P_n\}_{n\geq n_0}$ such that $\tM_n = P_n M_n$ for all $n\geq n_0$. We denote this equivalence by $\{M_n\}_{n\geq n_0} \lsim \{\tM_n\}_{n\geq n_0}$.	If this is the case, we also say that two SiDEs  \eqref{eq1.2}, \eqref{eq2.3} are left equivalent.
}
\end{definition}

\begin{lem}\label{lem2.1}
	Consider the behavior matrix sequence $\{M_n\}_{n\geq n_0}$ of system  \eqref{eq1.2}. Then for all $n\geq n_0$, we have that
	\begin{equation}\label{block-upper-M}
	\{M_n\}_{n\geq n_0} \  \lsim  \ \left\{ 
	\m{A_{n,1}& B_{n,1}    & C_{n,1}     \\
		0	& B_{n,2}    & C_{n,2}     \\
		0	&    0          & C_{n,3} \\  
		0     & 0            & 0} \right\}_{n\geq n_0}, \qquad
	\pm{r_{2,n} \\ r_{1,n} \\ r_{0,n} \\ v_n}
	\end{equation}
	where the matrices $A_{n,1}$, $B_{n,2}$, $C_{n,3}$ have full row rank. 
	Here, the numbers \ $r_{2,n}$, $r_{1,n}$, $r_{0,n}$, $v_n$ \ are row-sizes of the block rows of $M_n$. Furthermore, these numbers are invariant under left equivalent transformations. Thus, we can call them the \emph{local characteristic invariants of the SiDE \eqref{eq1.2}}.
\end{lem}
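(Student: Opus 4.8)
The plan is to build the claimed block-upper-triangular form by applying Lemma \ref{lem1.4} repeatedly, peeling off one column block at a time from left to right, and then to argue the invariance of the row-sizes by a rank-counting argument.

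\textbf{Construction of the form.} First I would work on the first column block. Apply Lemma \ref{lem1.4} to $M = A_n$: the orthogonal transformation $\m{T^T_{\perp}(A_n) \\ T^T_0(A_n)}$ brings $A_n$ to the shape $\m{A_{n,1} \\ 0}$ with $A_{n,1}$ of full row rank. Since this is a left multiplication by a pointwise nonsingular (indeed orthogonal) matrix, it is a left-equivalence transformation and acts simultaneously on $B_n$ and $C_n$, producing some $\m{B_{n,1}^{(0)} \\ \widehat B_n}$ and $\m{C_{n,1}^{(0)} \\ \widehat C_n}$. The top $r_{2,n} := \rank(A_n)$ rows are now in final form. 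I then repeat the argument on the remaining block $\m{\widehat B_n & \widehat C_n}$: apply Lemma \ref{lem1.4} with $M = \widehat B_n$ to split off $B_{n,2}$ of full row rank on top and a zero block below, carrying $\widehat C_n$ along; this defines $r_{1,n} := \rank(\widehat B_n)$. Finally apply Lemma \ref{lem1.4} once more to the surviving pure-$C$ block $\widehat{\widehat C}_n$, obtaining $C_{n,3}$ of full row rank and a zero block, defining $r_{0,n} := \rank(\widehat{\widehat C}_n)$ and $v_n := m - r_{2,n} - r_{1,n} - r_{0,n}$. Composing the three orthogonal transformations (padded with identity blocks of the appropriate sizes) gives a single pointwise nonsingular $P_n$ realizing \eqref{block-upper-M}, with $A_{n,1}$, $B_{n,2}$, $C_{n,3}$ all of full row rank by construction.

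\textbf{Invariance of the row-sizes.} The remaining task is to show that $r_{2,n}, r_{1,n}, r_{0,n}, v_n$ do not depend on the choice of $P_n$, i.e. that any two such block forms left-equivalent to $\{M_n\}$ share the same four numbers. The natural way is to express each of these as the rank of a matrix built intrinsically from $M_n$, since ranks are invariant under left (and right) multiplication by nonsingular matrices. Clearly $v_n = m - \rank(M_n)$ and $r_{2,n} = \rank(A_n)$, both manifestly invariant. For the other two I would look at the augmented matrices $\m{A_n & B_n}$ and $\m{A_n & B_n & C_n} = M_n$: a short computation with the block-triangular form shows $\rank\m{A_n & B_n} = r_{2,n} + r_{1,n}$ and $\rank(M_n) = r_{2,n} + r_{1,n} + r_{0,n}$, so that $r_{1,n} = \rank\m{A_n & B_n} - \rank(A_n)$ and $r_{0,n} = \rank(M_n) - \rank\m{A_n & B_n}$. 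Each right-hand side is unchanged when $M_n$ is replaced by $P_n M_n$, which proves the invariance and justifies calling these numbers local characteristic invariants.

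\textbf{Main obstacle.} The construction itself is routine given Lemma \ref{lem1.4}; the only point requiring a little care is the bookkeeping when composing the three stages — each later transformation must be padded by an identity block acting on the rows already in final form, so that the zero structure created in earlier steps is preserved. The invariance argument's crux is verifying the two rank identities $\rank\m{A_n & B_n} = r_{2,n}+r_{1,n}$ and $\rank(M_n)=r_{2,n}+r_{1,n}+r_{0,n}$ from the triangular form; these follow because $A_{n,1}$, then $B_{n,2}$ relative to the rows below it, then $C_{n,3}$ contribute independent rows, but one should state explicitly why, e.g., the rows of $\m{0 & B_{n,2}}$ are linearly independent from those of $\m{A_{n,1} & B_{n,1}}$ in $\m{A_n & B_n}$ — which holds because $B_{n,2}$ has full row rank and sits below a zero block in the first column. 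I expect this to be the part of the proof that needs to be written out rather than asserted.
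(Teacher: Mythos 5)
Your proposal is correct and follows essentially the same route as the paper: consecutive column compression of $A_n$, then $T_0^T(A_n)B_n$, then the remaining $C$-block via Lemma \ref{lem1.4}, followed by the invariance argument identifying $r_{2,n}=\rank(A_n)$, $r_{1,n}=\rank\m{A_n & B_n}-\rank(A_n)$, $r_{0,n}=\rank(M_n)-\rank\m{A_n & B_n}$, and $v_n=m-\rank(M_n)$. No substantive differences to report.
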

\begin{proof} The block diagonal form \eqref{block-upper-M} is obtained directly by consecutively compressing the block columns $A_{n}$, $B_{n}$, $C_{n}$ of $M_n$ via Lemma \ref{lem1.4}. In details, we have that 
	\begin{align*}
	\mbox{ rows of } A_{n,1} &\mbox{ form the basis of the space } \range(A^T_n) ,			\\  
	\mbox{ rows of } B_{n,2} &\mbox{ form the basis of the space } \range(T^T_0(A_n)\,  B_n)^T , \\
	\mbox{ rows of } C_{n,3} &\mbox{ form the basis of the space } \range\left(T^T_0 \left(\m{A_n^T \ B_n^T}^T \right)\,  C_n\right)^T .
	\end{align*}
	Moreover, from \eqref{block-upper-M}, we obtain the following identities
	\bens
	r_{2,n} &=& \rank(A_{n}), \\
	r_{1,n} &=& \rank(\m{ A_{n} & B_{n} }) - \rank(A_{n}), \\
	r_{0,n} &=& \rank(\m{ A_{n} &  B_{n} & C_{n} }) - \rank(\m{A_{n} & B_{n}}), \\
	v_n     &=& m - r_{2,n} - r_{1,n} - r_{0,n} \ ,
	\eens
	which proves the second claim.	
\end{proof}

Analogous to the continuous-time case, we will apply an \emph{algebraic approach} (see \cite{Bru09,MehS06}), which aims to reformulate \eqref{eq1.2} into a so-called \emph{strangeness-free} form, as stated in the following definition.

\begin{definition}\label{Def strangeness-free} 
\rm{
	(\cite{LinNT16})
	System  \eqref{eq1.2} is called \emph{strangeness-free} if there exists a pointwise nonsingular matrix sequence $\{P_n\}_{n\geq n_0}$ such that by scaling the SiDE  \eqref{eq1.2} at each point $n$ with $P_n$, then we obtain a new system of the form
	\be\label{def sfree}
	\pm{\hr_2 \\ \hr_1 \\ \hr_0 \\\hv } \quad 
	\m{\hA_{n,1} \\ 0 \\ 0 \\ 0} x(n\!+\!2) + \m{ \hB_{n,1} \\ \hB_{n,2} \\ 0 \\ 0} x(n\!+\!1) + \m{ \hC_{n,1} \\ \hC_{n,2} \\ \hC_{n,3} \\ 0} x(n)  \!=\! \m{ \hat{f}_1(n) \\  \hat{f}_2(n) \\  \hat{f}_3(n) \\ \hat{f}_4(n)}, 
	\ee
	for all $n\geq n_0$, where matrix $\m{\hA^T_{n,1} \ \hB^T_{n+1,2} \ \hC^T_{n+2,3}}^T$ always has full row rank. 
}
\end{definition}


In order to perform an algebraic approach, an additional assumption below is usually needed.

\begin{assumption}\label{Ass1} 
\rm{
Assume that the local characteristic invariants $r_{2,n}$, $r_{1,n}$, $r_{0,n}$ become global, i.e., they are constant for all $n\geq n_0$. \ Furthermore, assume that two matrix sequences $\left\{ \m{ A^T_{n,1} & B^T_{n,2} & C^T_{n,3} }^T \right\}_{n\geq n_0}$ and $\left\{ \m{ B^T_{n,2} & C^T_{n,3} }^T \right\}_{n\geq n_0}$ have constant rank for all $n\geq n_0$.
}
\end{assumption}

\begin{rmk} 
\rm{
	Following directly from the proof of Lemma \ref{lem2.1}, we see that Assumption \ref{Ass1} is satisfied if and only if five following constant rank conditions are satisfied
	%
	%
	\begin{align} \label{constant rank assumption} 
	& \rank(A_{n}) \!\equiv\! \const, \ \rank(\m{A_{n} & B_{n}}) \!\equiv\! \const , \ \rank(\m{A_{n} &  B_{n} & C_{n}}) \!\equiv\! \const, \notag \\
	& \rank(T^T_0(A_n) \, B_n) \!\equiv\! \const, \ \rank \left(T^T_0 \left(\m{A_n^T \ B_n^T}^T \right) \, C_n\right) \!\equiv\! \const
	\end{align}
}
\end{rmk}

\begin{rmk}
\rm{
In \eqref{def sfree}, the quantities $\hr_2$, $\hr_1$, and $\hr_0$ are dimensions of the second order dynamics part, the first order dynamics part, and the algebraic (zero order) part, respectively. Furthermore, $r_2 + r_1$ is exactly the degree of freedoms. 
}
\end{rmk}

Let us call the number \ $r_u := 3 r_2 + 2 r_1 + r_0$ the \emph{upper rank} of system \eqref{eq1.2}. Clearly, $r_u$ is invariant under left equivalence transformations. Rewrite \eqref{eq2.2} block row-wise, we obtain the following system for all $n\geq n_0$.
\begin{subequations}\label{eq2.5}
	\begin{alignat}{3}
	\label{eq2.5a} A_{n,1} x(n+2)  + B_{n,1} x(n+1)  + C_{n,1} x(n) &= f_1(n),& \quad r_2 \ \mbox{equations}, \\
	\label{eq2.5b} B_{n,2} x(n+1)  + C_{n,2} x(n) &= f_2(n), & \quad r_1 \ \mbox{equations}, \\
	\label{eq2.5c} C_{n,3} x(n) &= f_3(n), & \quad r_0 \ \mbox{equations}, \\
	\label{eq2.5d} 0 &= f_4(n),& \quad v \ \mbox{equations}.
	\end{alignat}
\end{subequations}
Since the matrices $A_{n,1}$, $B_{n,2}$, $C_{n,3}$ have full row rank, the number of scalar difference equations of order $2$ (resp. $1$, and $0$) in  \eqref{eq1.2} is exactly $r_{2}$ (resp. $r_{1}$ and $r_{0}$), while $v$ is the number of redundant equations. \ 
Now we are able to define the shift-forward operator $\De$, which acts on some or whole equations of system \eqref{eq2.5}. This operator maps each equation of system \eqref{eq2.5} 
at the time instant $n$ to the equation itself at the time $n+1$, for example
\be\label{2.5c-shift} 
\De: C_{n,3} x(n) = f_3(n) \mapsto C_{n+1,3} x(n+1) = f_3(n+1).
\ee
Clearly, under Assumption \ref{Ass1}, this shift operator can be applied to equations of system \eqref{eq2.5}. 
In order to reveal all hidden constraints of \eqref{eq2.5} we propose the idea that for each $j=1, 2$, we use equations of order less than $j$ to reduce the number of scalar equations of order $j$. 
This task will be performed as follows. Firstly, by applying Lemma \ref{lem1.2} to two matrix pairs $(B_{n,2},C_{n+1,3})$ and $\left( A_{n,1}, \sm{B_{n+1,2} \\ C_{n+2,3}} \right)$, we obtain matrix sequences 
$\{ S^{(i)}_{n} \}_{n\geq n_0}$, $i=1, 2$, and $\{ Z^{(j)}_{n} \}_{n\geq n_0}$, $j=1,...,5$, of appropriate sizes such that for all $n\geq n_0$, the following conditions hold true.
\begin{enumerate}
	\item[i)] For $i=1, 2$, the matrices $\sm{S^{(i)}_{n} \\ Z^{(i)}_{n}} \in \r^{r_i,r_i}$ are orthogonal.
	\item[ii)] The following identities hold true.
	\begin{subequations}\label{eq2.10}
		\begin{alignat}{3}
		\label{eq2.10a} Z^{(1)}_{n} B_{n,2} + Z^{(3)}_{n} C_{n+1,3} \ &=& \ 0, \\
		\label{eq2.10b} Z^{(2)}_{n} A_{n,1} + Z^{(4)}_{n} B_{n+1,2} + Z^{(5)}_{n} C_{n+2,3} \ & = & \ 0.
		\end{alignat}	
	\end{subequations}	
	\item[iii)] Both matrix pairs $\left(S^{(2)}_{n} A_{n,1}, \sm{S^{(1)}_{n}B_{n+1,2} \\ C_{n+2,3}} \right)$, $\left(S^{(1)}_{n} B_{n,2},C_{n+1,3}\right)$ have no hidden redundancy. 
\end{enumerate}  

Now we will transform the SiDE \eqref{eq1.2} as in Lemma \ref{lem2.10} below.

\begin{lem}\label{lem2.10} Assume that Assumption \ref{Ass1} is satisfied. Let the matrix sequences $\{ S^{(i)}_{n} \}_{n\geq n_0}$, $i=1, 2$, and $\{ Z^{(j)}_{n} \}_{n\geq n_0}$, $j=1,...,5$ be defined as above. 
	Then the SiDE  \eqref{eq1.2} has exactly the same solution set as the transformed system 	
	\[
	\pm{ d_{2} \\ s_2 \\ \hline \\[-0.35cm] d_1 \\ s_1 \\ \hline \\[-0.35cm] r_0 \\ v} \ 
	\m{S^{(2)}_{n} A_{n,1}  & S^{(2)}_{n} B_{n,1}      			   			& \ S^{(2)}_{n} C_{n,1} 	\\
		0				& Z^{(2)}_{n} B_{n,1} + Z^{(4)}_{n} C_{n+1,2}   & \ Z^{(2)}_{n} C_{n,1}  \\ \hline \\[-0.35cm]
		0 				& S^{(1)}_{n} B_{n,2}  	   			   			& \ S^{(1)}_{n} C_{n,2}  \\
		0 				& 0                			   					& \ Z^{(1)}_{n} C_{n,2} 	\\ \hline \\[-0.35cm]
		0				& 0				 			   					& \ C_{n,3}     \\
		0 		    	& 0              			   					& \ 0 	} 
	\m{x(n+2) \\ x(n+1) \\ x(n)} =  
	\]
	\begin{equation}\label{eq2.11}
	= \m{ S^{(2)}_{n} f_1(n) \\ 
		Z^{(2)}_{n} f_1(n) + Z^{(4)}_{n} f_2(n+1) + Z^{(5)}_{n} f_3(n+2) \\ \hline \\[-0.35cm] 
		S^{(1)}_{n} f_2(n) \\ 
		Z^{(1)}_{n} f_2(n) + Z^{(3)}_{n} f_3(n+1) \\ \hline 
		f_3(n) \\  
		f_4(n)}\ \mbox{ for all } n\geq n_0.
	\end{equation}
	Furthermore, both matrix pairs $\left(S^{(2)}_{n} A_{n,1}, \sm{S^{(1)}_{n} B_{n+1,2} \\ C_{n+2,3}} \right)$, $\left(S^{(1)}_{n} B_{n,2},C_{n+1,3}\right)$ have no hidden redundancy. 
\end{lem}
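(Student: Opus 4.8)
The plan is to show that the system \eqref{eq2.11} is obtained from \eqref{eq2.5} by scaling blocks with a pointwise nonsingular matrix sequence, so that the two systems are left equivalent and hence share the same solution set by the discussion preceding Definition \ref{defstrleq}. The key observation is that the only genuinely new ingredients are the three shifted equations $B_{n+1,2}x(n+2)+C_{n+1,2}x(n+1)=f_2(n+1)$ and $C_{n+1,3}x(n+1)=f_3(n+1)$ and $C_{n+2,3}x(n+2)=f_3(n+2)$, each of which is a consequence of \eqref{eq2.5b}--\eqref{eq2.5c} via the shift operator $\De$ under Assumption \ref{Ass1} (so that the constant-rank structure, and hence the block form \eqref{block-upper-M}, persists after shifting). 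Adjoining these shifted equations does not change the solution set of \eqref{eq1.2}: any solution of \eqref{eq1.2} automatically satisfies them, and conversely they are redundant with the un-shifted system.

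First I would make precise the augmented system consisting of \eqref{eq2.5a}--\eqref{eq2.5d} together with the shifted rows listed above, and note this enlarged system has the same solution set as \eqref{eq1.2}. Next I would apply Lemma \ref{lem1.2} to the pair $(B_{n,2},C_{n+1,3})$: this pair exactly fits the hypotheses, since $C_{n+1,3}$ has constant rank (Assumption \ref{Ass1}, shifted) and $\sm{B_{n,2}\\ C_{n+1,3}}$ has constant rank (again Assumption \ref{Ass1} applied to $\m{B^T_{n,2}\ C^T_{n,3}}^T$, shifted). This yields the orthogonal matrix $\sm{S^{(1)}_{n}\\ Z^{(1)}_{n}}$ with $Z^{(1)}_{n}B_{n,2}+Z^{(3)}_{n}C_{n+1,3}=0$, with $S^{(1)}_{n}B_{n,2}$ of full row rank and $(S^{(1)}_{n}B_{n,2},C_{n+1,3})$ free of hidden redundancy. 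Splitting \eqref{eq2.5b} by this orthogonal map and combining the $Z^{(1)}$-part with the shift of \eqref{eq2.5c} replaces \eqref{eq2.5b} by the two rows $S^{(1)}_{n}B_{n,2}x(n+1)+S^{(1)}_{n}C_{n,2}x(n)=S^{(1)}_nf_2(n)$ and $Z^{(1)}_{n}C_{n,2}x(n)=Z^{(1)}_{n}f_2(n)+Z^{(3)}_{n}f_3(n+1)$ — exactly the $d_1$- and $s_1$-blocks of \eqref{eq2.11}.

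Then I would repeat the same device one order up, applying Lemma \ref{lem1.2} to the pair $\big(A_{n,1},\sm{B_{n+1,2}\\ C_{n+2,3}}\big)$, whose constant-rank hypotheses are precisely the remaining clauses of Assumption \ref{Ass1} (after shifting). This produces $\sm{S^{(2)}_{n}\\ Z^{(2)}_{n}}$ orthogonal with $Z^{(2)}_{n}A_{n,1}+Z^{(4)}_{n}B_{n+1,2}+Z^{(5)}_{n}C_{n+2,3}=0$, giving relation \eqref{eq2.10b} and the no-hidden-redundancy claims. Splitting \eqref{eq2.5a} by this map and combining the $Z^{(2)}$-part with the shifts of the $S^{(1)}$-block of \eqref{eq2.5b} and of \eqref{eq2.5c} (using $Z^{(4)}_nB_{n+1,2}x(n+2)=Z^{(4)}_nf_2(n+1)-Z^{(4)}_nS^{(1)}_n\!\!\;^{-1}\cdots$ — more simply, noting $Z^{(4)}_n$ multiplies the full shifted row \eqref{eq2.5b} and $Z^{(5)}_n$ the doubly-shifted \eqref{eq2.5c}) yields the $d_2$- and $s_2$-blocks: $S^{(2)}_{n}A_{n,1}x(n+2)+S^{(2)}_{n}B_{n,1}x(n+1)+S^{(2)}_{n}C_{n,1}x(n)=S^{(2)}_nf_1(n)$ and the row with right-hand side $Z^{(2)}_{n}f_1(n)+Z^{(4)}_{n}f_2(n+1)+Z^{(5)}_{n}f_3(n+2)$, the left-hand side of the latter collapsing to $\big(Z^{(2)}_{n}B_{n,1}+Z^{(4)}_{n}C_{n+1,2}\big)x(n+1)+Z^{(2)}_{n}C_{n,1}x(n)$ precisely because of \eqref{eq2.10b}. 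Finally I would assemble the global scaling matrix from the orthogonal blocks $\sm{S^{(i)}_n\\ Z^{(i)}_n}$ and identity blocks on the $r_0$- and $v$-rows, observe it is pointwise nonsingular, conclude left equivalence of the full systems, and carry the no-hidden-redundancy assertions over verbatim from clause iii) of the construction (Lemma \ref{lem1.2}).

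\textbf{Main obstacle.}
The delicate point is bookkeeping the shifts so that the inhomogeneities come out exactly as written: one must be careful that $Z^{(4)}_n$ acts on the once-shifted equation \eqref{eq2.5b} (hence $f_2(n+1)$) and $Z^{(5)}_n$ on the twice-shifted \eqref{eq2.5c} (hence $f_3(n+2)$), and that after substituting \eqref{eq2.10b} the $x(n+2)$-term in the $s_2$-row genuinely cancels; likewise for $Z^{(3)}_n$ and \eqref{eq2.10a} in the $s_1$-row. The rest — checking constant ranks survive the shift under Assumption \ref{Ass1}, invoking Lemma \ref{lem1.2} twice, and noting orthogonal splittings preserve solution sets — is routine.
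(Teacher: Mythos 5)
Your proposal follows essentially the same route as the paper's proof in Appendix A: adjoin the shifted equations to form an extended system with the same solution set, split the first- and second-order block rows with the orthogonal matrices $\sm{S^{(i)}_n \\ Z^{(i)}_n}$, eliminate the leading terms of the $Z^{(i)}$-parts via \eqref{eq2.10a}--\eqref{eq2.10b}, and use reversibility of the elementary row operations for the converse inclusion. The one caution is your opening framing of the passage from \eqref{eq2.5} to \eqref{eq2.11} as a left equivalence by a single pointwise nonsingular $m\times m$ scaling --- that cannot be literally true, since the transformation mixes equations at times $n$, $n+1$, $n+2$ and then discards the appended shifted rows; the correct statement (which your augmented-system argument in fact delivers, and which the paper makes explicit via \eqref{eq2.9} and \eqref{eqA2}) is that the row operations act reversibly on the extended system and the discarded rows remain consequences of \eqref{eq2.11} holding for all $n$.
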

\begin{proof} 
	The proof is not too difficult but rather lengthy and technical, so we leave it to \ref{App0}.
\end{proof}
Consider system \eqref{eq2.11}, we see that the upper rank of the behavior matrix is 
\bens 
r_u^{new} &\leq& 3 d_2 + 2 (s_2 + d_1) + (s_1 + r_0) \\ 
&=& 3 (r_2-s_2) + 2 (s_2 + r_1 - s_1) + (s_1 + r_0) \\ 
&=& r-(s_2+s_1) \leq r.
\eens
In conclusion, after performing a so-called \emph{index reduction step}, which passes from \eqref{eq2.5} to \eqref{eq2.11}, we have reduced the upper rank $r_u$ at least by $s_2+s_1$. 
Continue in this fashion until $s_1=s_2=0$, we obtain the following algorithm.
\begin{algorithm}[H]
	\caption{Index reduction steps for SiDEs at the time point $n$}
	\label{Alg1}
	\textbf{Input:} The SiDE  \eqref{eq1.2} and its behavior form \eqref{eq2.2}. \\
	\textbf{Output:} A strangeness-free SiDE of the form \eqref{def sfree} and the strangeness-index $\mu$.
	\begin{algorithmic}[1]
		\State	Set $i=0$. 
		%
		%
		\State  Transform the behavior matrix $\m{A_n & B_n & C_n}$ to the block upper triangular form
		\[\tM_n :=
		\m{
			A_{n,1} & B_{n,1}  & C_{n,1}     \\
			0       & B_{n,2}  & C_{n,2}      \\
			0       & 0        & C_{n,3}  \\ 
			0       & 0        & 0
		},
		\]
		where all the matrices $A_{n,1}$, $B_{n,2}$, $C_{n,3}$ on the main diagonal have full row rank.
		The system now takes the form \eqref{eq2.5}.
		\IF{both matrix pairs $\left( A_{n,1}, \sm{B_{n+1,2} \\ C_{n+2,3}}\right)$ and $\left(B_{n,2},C_{n+1,3} \right)$ have no hidden redundancy} set  $\mu=i$ and STOP. 
		\ELSE{ set $i := i + 1$} 
		\State Find the matrices $S^{(j)}_{n}$, $j=1, 2$, and $Z^{(j)}_{n}$, $j=1,...,5$.
		%
		%
		%
		%
		%
		\State Transform the system to the new form \eqref{eq2.11} as in Lemma \ref{lem2.10}. 				
		\ENDIF
		\State Go back to Step 2 with the updated behavior matrix.		
	\end{algorithmic}
\end{algorithm}

After each index reduction step the upper rank $r^{i}_u$ has been decreased at least by $s^{i}_2+s^{i}_1$, so Algorithm \ref{Alg1} terminates after a finite number $\mu$ of iterations, which will be called the \emph{strangeness-index} of the SiDE  \eqref{eq1.2}.

\begin{thm}\label{thm2} Consider the SiDE \eqref{eq1.2} and assume that Assumption \ref{Ass1} is satisfied for any $n$ and any $i$ considered within the loop, such that the strangeness-index $\mu$ is well-defined by Algorithm \ref{Alg1}. Then the SiDE  \eqref{eq1.2} has the same solution set as the strangeness-free SiDE
	\be\label{sfree-SiDE}
	\pm{r^{\mu}_{2} \\ r^{\mu}_{1} \\ r^{\mu}_{0} \\ v^{\mu}} \qquad
	\m{\hA_{n,1}& \hB_{n,1}    & \hC_{n,1}     \\
		0		& \hB_{n,2}    & \hC_{n,2}     \\
		0		&    0          & \hC_{n,3} \\ 
		0       & 0          & 0}
	\m{x(n+2) \\ x(n+1) \\ x(n)} = \m{ \hg_1(n) \\  \hg_2(n) \\  \hg_3(n) \\ \hg_4(n)}\ \mbox{ for all } \ n\geq n_0, 
	\ee
	where the matrix $\m{\hA^T_{n,1} \ \hB^T_{n+1,2} \ \hC^T_{n+2,3}}^T$ has full row rank for all $n\geq n_0$, and the functions $\hg_2$ and $\hg_3$ consist of the components of $f(n),\,f(n+1),\dots,f(n+2\mu)$ (at most).
\end{thm}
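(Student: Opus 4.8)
The plan is to show that Algorithm \ref{Alg1} is well-defined under the stated assumptions, that each pass strictly decreases the upper rank until the stopping criterion is met, and that the resulting system is left equivalent to the original SiDE, so that the two share the same solution set. First I would set up the induction on the loop counter $i$: at the start of each pass the current behavior matrix is left equivalent to $\{M_n\}_{n\geq n_0}$ by Definition \ref{defstrleq} (since Step~2 only applies pointwise nonsingular scalings via Lemma \ref{lem1.4}, and Step~7 replaces the system by \eqref{eq2.11}, which Lemma \ref{lem2.10} guarantees has the same solution set and is obtained by pointwise nonsingular transformations built from the orthogonal blocks $\sm{S^{(i)}_n \\ Z^{(i)}_n}$ together with the shift operator $\De$). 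Because left equivalence is transitive and the constant-rank Assumption \ref{Ass1} is imposed afresh at every $n$ and every $i$ encountered in the loop, Lemma \ref{lem2.1} applies at the top of each pass, so Step~2 is always legitimate and the local characteristic invariants $r^{i}_2,r^{i}_1,r^{i}_0,v^i$ are well-defined at stage $i$.

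Next I would establish termination. Exactly as in the displayed computation preceding Algorithm \ref{Alg1}, after the transformation of Lemma \ref{lem2.10} the upper rank satisfies $r_u^{\,i+1}\leq r_u^{\,i}-(s^{i}_2+s^{i}_1)$, where $s^{i}_2,s^{i}_1$ are the numbers of rows removed from the order-$2$ and order-$1$ blocks by applying Lemma \ref{lem1.2} to the pairs $\left(A_{n,1},\sm{B_{n+1,2}\\ C_{n+2,3}}\right)$ and $\left(B_{n,2},C_{n+1,3}\right)$. Whenever the stopping test in Step~3 fails, at least one of these pairs has a hidden redundancy, hence $s^{i}_2+s^{i}_1\geq 1$ and the upper rank drops by at least one. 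Since $r_u\geq 0$ is an integer, the loop halts after finitely many iterations; call this number $\mu$. At termination both matrix pairs have no hidden redundancy, which by Lemma \ref{lem1.3} (applied with $k=2$ to the full-row-rank rows of $\hA_{n,1}$, $\hB_{n+1,2}$, $\hC_{n+2,3}$, after shifting \eqref{eq2.5b} forward once and \eqref{eq2.5c} forward twice) forces $\m{\hA^T_{n,1}\ \hB^T_{n+1,2}\ \hC^T_{n+2,3}}^T$ to have full row rank for all $n\geq n_0$; this is precisely the strangeness-free condition of Definition \ref{Def strangeness-free}, so the output system has the form \eqref{sfree-SiDE}.

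Finally I would track the inhomogeneity. Each index reduction step replaces the right-hand side as in \eqref{eq2.11}: the new $f_2$-type and $f_3$-type components involve $f$ evaluated at $n$, $n+1$, and $n+2$ through the terms $Z^{(4)}_n f_2(n+1)$, $Z^{(5)}_n f_3(n+2)$, $Z^{(3)}_n f_3(n+1)$, while the $r_0$ and $v$ blocks keep their arguments. An easy induction on the step index then shows that after $\mu$ steps the components of $\hg_2$ and $\hg_3$ are linear combinations of the entries of $f(n),f(n+1),\dots,f(n+2\mu)$ at most — each step can increase the largest shift used by at most $2$, starting from $0$. Collecting the scalings, this proves that \eqref{eq1.2} and \eqref{sfree-SiDE} are left equivalent (up to the shift bookkeeping recorded by Lemma \ref{lem2.10}) and therefore have identical solution sets.

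\textbf{Main obstacle.} The delicate point is not any single algebraic manipulation but the bookkeeping that the constant-rank hypotheses needed to invoke Lemmas \ref{lem2.1} and \ref{lem1.2} at stage $i+1$ are exactly those covered by "Assumption \ref{Ass1} for any $n$ and any $i$ considered within the loop" — one must check that the blocks produced in \eqref{eq2.11} indeed inherit (under that assumption) the constant-rank structure required to re-enter Step~2, and that the "no hidden redundancy" conclusions of Lemma \ref{lem2.10} are the correct quantities tested in Step~3. Verifying this consistency, together with the claim that the shift depth grows by at most $2$ per step, is where the real work lies; the rest is transitivity of left equivalence and a finite-descent argument.
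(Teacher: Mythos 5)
Your proposal is correct and follows essentially the same route as the paper: the paper's own proof is a one-line appeal to Algorithm \ref{Alg1} (whose solution-set preservation per step is Lemma \ref{lem2.10} and whose termination is the upper-rank descent argument given just before the algorithm) together with Lemma \ref{lem1.3} for the full row rank of $\m{\hA^T_{n,1}\ \hB^T_{n+1,2}\ \hC^T_{n+2,3}}^T$. You have simply spelled out the details the paper leaves implicit, including the correct bookkeeping that each reduction step raises the maximal shift in the inhomogeneity by at most $2$, giving the stated bound $f(n),\dots,f(n+2\mu)$.
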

\begin{proof}
	The proof is a direct consequence of Algorithm \ref{Alg1}, where the matrix $\m{\hA^T_{n,1} \ \hB^T_{n+1,2} \ \hC^T_{n+2,3}}^T$ has full row rank due to Lemma \ref{lem1.3}.
\end{proof}

To illustrate Algorithm \ref{Alg1}, we consider the following example.

\begin{example}\label{exp2}
\rm{
Given a parameter $\a \in \r$, we consider the second order SiDE
	\be\label{eq2.13}
	\m{ 1 & n\!+\!1 & n\!\!+\!\!4 \\  0 & 0 & 0 \\ 0 & 0 & 0} \!x(n\!+\!2) \!+\!
	\m{ 0 & \a & 2n\!+\!3 \\ 1 & n & 1 \\  0 & 0 & 0}\! x(n\!+\!1) \!+\!
	\m{ 0 & n\!+\!1 & 0 \\ 0 & 0 & n \\  0 & 0 & n\!+\!1}\! x(n) \!=\! \m{f_{1}(n) \\ f_{2}(n) \\ f_{3}(n)}, 
	\ee
	for all $n\geq 0$. Fortunately, the behavior matrix 
	\begin{equation*}
	M \!=\! 
	\left[
	\begin{tabular}{ccc|ccc|ccc}
	1 & $\ n\!+\!1$   & $n\!+\!4 $	& 0   & $\a$ & $2n+3$ & 0 & $n+1 $	& 0 \\  \hline 
	0 & 0 			& 0 		& 1 \ & $n$  & 1 	 & 0 & 0	 	& $n$ \\ \hline 
	0 & 0 			& 0 		& 0 \ & 0  & 0 	 & 0 & 0	 	& $n+1$
	\end{tabular} 
	\right]
	\!=\! \m{	A_{n,1} & B_{n,1}  & C_{n,1}     \\
		0       & B_{n,2}  & C_{n,2}      \\
		0       & 0        & C_{n,3} }
	\end{equation*}
	is already in the block diagonal form, so we do not need to perform Step 2 in Algorithm \ref{Alg1}. Furthermore, all constant rank conditions required in Assumption \ref{Ass1} are satisfied. 
	We observe that 
	\begin{align*}
	B_{n+1,2} &= \m{1 \ & \ n+1 & \ 1}, \quad C_{n+1,2} = \m{0 \ & \ 0	& \ n+1}, \\
	C_{n+1,3} &= \m{0 \ & \ 0	& \ n+2}, \quad C_{n+2,3} = \m{0 \ & \ 0	& \ n+3}. 
	\end{align*}
	By directly verifying, we see that the matrix pair $\left(A_{n,1}, \sm{B_{n+1,2} \\ C_{n+2,3} } \right)$ has hidden redundancy, while the pair $(B_{n,2},C_{n+1,3})$ does not. 
	Now we choose $S^{(2)}_{n} = [ \ ]$, $Z^{(2)}_{n}=1$, $Z^{(4)}_{n}=-1$, $Z^{(5)}_{n}=-1$. 
	Notice that the fact $Z^{(5)}_{n}$ is non-empty leads to the appearance of $f_3(n+2)$. Furthermore, the resulting system \eqref{eq2.11} reads
	\begin{equation}\label{eq2.14}
	\m{ 0  & \a & n\!+\!2 \\ 1 & n & 1 \\ \hline  0 & 0 & 0} \! x(n\!+\!1) \!+\! \m{ 0 & n\!+\!1 & \! 0 \\ 0 & 0 & \! n \\\hline  0 & 0 & \! n\!+\!1} \! x(n) \!=\! \m{f_{1}(n) \!-\! f_2(n\!+\!1) \!-\! f_3(n\!+\!2) \\ f_{2}(n) \\\hline f_{3}(n)}. 
	\end{equation}
	Since the leading coefficient matrix associated with $x(n+2)$ becomes zero, so for notational convenience we do not write this term. 
	Go back to Step 3, we see that the following two cases may happen.\\
	i) If $\alpha \not= 0$, then Algorithm \ref{Alg1} terminates here, and the strangeness-index is $\mu=1$. The number of time-shift appear in the inhomogeneity $f$ in the strangeness-free formulation \eqref{eq2.14} is $2$.\\
	ii) If $\a = 0$, then the matrix pair $\left(\sm{0 & \a & n\!+\!2 \\ 1 \ & n & 1},  \m{0 & 0	& n+2} \right)$ have hidden redundancy. Now we choose $S^{(1)}_{n} = \m{1 & 0}$, $Z^{(1)}_{n}=\m{0 & 1}$, $Z^{(3)}_{n}=-\m{0 & 1}$. The resulting system \eqref{eq2.11} now reads
	\begin{align}\label{eq2.15}
	& \m{ 1 \ & n & 1 \\ \hline  0 \ & 0 & 0 \\ 0 & 0 & 0} x(n+1) + \m{ 0 & 0 & n \\ \hline  0 & n+1 & 0 \\ 0 & 0 & n+1} x(n) \notag \\
	& = \m{f_{2}(n) \\ \hline f_{1}(n) - f_2(n+1) - f_3(n+2) - f_3(n+1) \\ f_{3}(n)}. 
	\end{align}
	Algorithm \ref{Alg1} terminates here, and the strangeness-index is $\mu=2$. However, the number of time-shifts appearing in the inhomogeneity $f$ in the strangeness-free formulation \eqref{eq2.15} remains $2$.
}
\end{example}

As a direct consequence of Theorem \ref{thm2}, we obtain the solvability for \eqref{eq1.2} as follows.

\begin{corollary}\label{Cor1} Under the assumption of Theorem \ref{thm2}, the following statements hold true.
	\begin{enumerate}
		\item[i)] The corresponding IVP for the SiDE \eqref{eq1.2} is solvable if and only if either $v^{\mu}=0$ or $\hg_4(n)\!=\!0$ for all $n\geq n_0$. Furthermore, it is uniquely solvable if, in addition, we have $d = m-v^{\mu}$.
		\item[ii)] The initial condition \eqref{eq1.3} is consistent if and only if the following equalities hold.
		\bens 
		\hB_{n_0,2}x_1 + \hC_{n_0,2} x_0 &=& \hg_2(n_0), \\
		\hC_{n_0,3}x_0 &=& \hg_3(n_0).
		\eens 
	\end{enumerate}	
\end{corollary}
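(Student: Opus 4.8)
The plan is to derive Corollary~\ref{Cor1} directly from the strangeness-free reformulation \eqref{sfree-SiDE} provided by Theorem~\ref{thm2}, using the fact that the two systems have identical solution sets. First I would observe that, since \eqref{eq1.2} and \eqref{sfree-SiDE} are left equivalent, the IVP \eqref{eq1.2}--\eqref{eq1.3} is solvable precisely when the IVP for \eqref{sfree-SiDE} together with \eqref{eq1.3} is solvable, so it suffices to analyze the block rows of \eqref{sfree-SiDE}. The last block row reads $0 = \hg_4(n)$, which is obviously a necessary condition for any solution to exist, and it is vacuous exactly when $v^\mu = 0$; this gives the ``if'' direction of the first equivalence in part~i). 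For the converse, I would show that when $v^\mu = 0$ or $\hg_4 \equiv 0$ one can actually construct a solution: because the matrix $\m{\hA^T_{n,1}\ \hB^T_{n+1,2}\ \hC^T_{n+2,3}}^T$ has full row rank for all $n$, the first three block rows of \eqref{sfree-SiDE}, read at the appropriate shifted time points, can always be satisfied by choosing the entries of $x$ step by step.

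For the uniqueness assertion, the key point is a dimension count. The system \eqref{sfree-SiDE} imposes, at each step, $r^\mu_2 + r^\mu_1 + r^\mu_0$ scalar conditions determining components of the state; the number of free components in $x(n) \in \r^d$ is $d$, and the number of conditions available through the strangeness-free structure is $m - v^\mu = r^\mu_2 + r^\mu_1 + r^\mu_0$. When $d = m - v^\mu$, the leading coefficient block $\m{\hA^T_{n,1}\ \hB^T_{n+1,2}\ \hC^T_{n+2,3}}^T$ becomes a square matrix of full row rank, hence nonsingular, so once $x(n_0)$ and $x(n_0+1)$ are fixed consistently, the recursion \eqref{sfree-SiDE} determines $x(n+2)$ uniquely from $x(n+1)$, $x(n)$ and the data; induction on $n$ then yields uniqueness. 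I would write this out as a short forward recursion argument, pointing out that solvability was already established so existence-plus-uniqueness follows.

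For part~ii), consistency of the initial condition means exactly that the prescribed values $x(n_0+1) = x_1$ and $x(n_0) = x_0$ do not contradict any equation of \eqref{sfree-SiDE} that involves only $x(n_0)$ and $x(n_0+1)$ (and not $x(n_0+2)$, which is still free to be chosen). Reading \eqref{sfree-SiDE} at $n = n_0$, the third block row $\hC_{n_0,3}\,x(n_0) = \hg_3(n_0)$ involves only $x(n_0)$, and the second block row $\hB_{n_0,2}\,x(n_0+1) + \hC_{n_0,2}\,x(n_0) = \hg_2(n_0)$ involves only $x(n_0)$ and $x(n_0+1)$; the first block row additionally involves $\hA_{n_0,1}\,x(n_0+2)$, which, because $\hA_{n_0,1}$ has full row rank relative to the combined leading block, places no restriction on $x_0, x_1$ alone. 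Hence the two displayed equalities are necessary. Conversely, if they hold then one can propagate the recursion forward to build a full solution matching the initial data, so they are also sufficient; here I would again invoke the full-row-rank property of $\m{\hA^T_{n,1}\ \hB^T_{n+1,2}\ \hC^T_{n+2,3}}^T$ to guarantee solvability of the forward step.

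The main obstacle I anticipate is the converse (sufficiency) directions in parts~i) and~ii): one must argue carefully that the full-row-rank condition on the shifted leading coefficient really does allow a solution to be built forward without ever running into an obstruction, taking into account that the three block rows of \eqref{sfree-SiDE} ``see'' $x$ at three different time levels. This is essentially the same bookkeeping that underlies the strangeness-free notion in Definition~\ref{Def strangeness-free}, so I would phrase it by splitting $x(n+2)$, using Lemma~\ref{lem1.4}-type compressions to solve the full-row-rank block, and checking that the remaining free directions of $x(n+2)$ are exactly the degrees of freedom $r^\mu_2$, with the analogous statements one step down; the consistency conditions in ii) are precisely what makes the base of this induction go through.
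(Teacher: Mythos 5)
Your overall route is the intended one: the paper offers no separate proof of this corollary, presenting it as an immediate reading-off from the strangeness-free form \eqref{sfree-SiDE} of Theorem \ref{thm2}, and your treatment of part i) --- necessity of $\hg_4\equiv 0$ from the last block row, existence via the forward recursion made possible by the full row rank of $\m{\hA^T_{n,1}\ \hB^T_{n+1,2}\ \hC^T_{n+2,3}}^T$, and uniqueness when $d=m-v^\mu$ forces that block to be square and hence nonsingular --- is exactly the argument the authors have in mind. Your accounting of which equations the recursion consumes at shifted time levels (row 1 at $n$, row 2 at $n+1$, row 3 at $n+2$) and which are left over as genuine constraints on the initial data is the right way to make the ``direct consequence'' precise.

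There is, however, a genuine gap in your sufficiency argument for part ii), and it is worth flagging because it also exposes an omission in the statement itself. A solution of \eqref{sfree-SiDE} must satisfy the zero-order block row $\hC_{n,3}x(n)=\hg_3(n)$ for \emph{every} $n\geq n_0$; at $n=n_0+1$ this imposes $\hC_{n_0+1,3}\,x_1=\hg_3(n_0+1)$, a condition on the prescribed value $x_1$ alone, which is not implied by the two displayed equalities in general. Concretely, in Example \ref{exp2} with $\a=0$, the two listed conditions fix only $\m{1&0&1}x_1$ and two components of $x_0$, whereas the zero-order row at $n=1$ additionally pins down the second and third components of $x_1$; an $x_1$ violating it but satisfying the listed conditions admits no solution. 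Your forward recursion chooses $x(n_0+2)$ to satisfy the three rows that involve it --- which the full-row-rank hypothesis does guarantee --- but it has no freedom left to repair a violated equation involving only $x_1$, so the claim ``the two equalities hold $\Rightarrow$ a solution can be propagated forward'' fails at the very first step. The correct characterization must include $\hC_{n_0+1,3}x_1=\hg_3(n_0+1)$ as a third condition; note this has no continuous-time analogue, since there the second initial datum is $\dot x(t_0)$ rather than the state at a neighbouring time point, which is presumably why it was dropped.
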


Another direct consequence of Theorem \ref{pro3.1} is that we can obtain an inherent regular difference equation as follows.
\begin{corollary}\label{Cor2} Assume that the IVP \eqref{eq1.2}-\eqref{eq1.3} is uniquely solvable for any consistent initial condition.
	Under the assumption of Theorem \ref{thm2}, the solution $x$ to this IVP is also a solution to the \emph{(implicit) inherent regular difference equation}
	\be\label{underlying}
	\m{\hA_{n,1} \\ \hB_{n+1,2} \\ \hC_{n+2,3}} x(n+2) + \m{ \hB_{n,1}  \\ \hC_{n+1,2} \\ 0} x(n+1) + \m{\hC_{n,1} \\ 0 \\ 0}	x(n) = \m{ \hg_1(n) \\  \hg_2(n+1) \\  \hg_3(n+2)},
	\ee
	where the matrix $\m{\hA^T_{n,1} & \hB^T_{n+1,2} & \hC^T_{n+2,3}}$ is invertible for all $n\geq n_0$.	
\end{corollary}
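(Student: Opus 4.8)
The plan is to read \eqref{underlying} off the strangeness-free form \eqref{sfree-SiDE} by time-shifting its lower block rows, and then to deduce invertibility of the leading coefficient from the full-row-rank property already supplied by Theorem \ref{thm2} together with a short dimension count.

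First I would invoke Theorem \ref{thm2}: any solution $x$ of the IVP \eqref{eq1.2}--\eqref{eq1.3} satisfies \eqref{sfree-SiDE} for all $n\geq n_0$, hence in particular
\[
\begin{aligned}
\hA_{n,1}x(n+2)+\hB_{n,1}x(n+1)+\hC_{n,1}x(n)&=\hg_1(n),\\
\hB_{n,2}x(n+1)+\hC_{n,2}x(n)&=\hg_2(n),\\
\hC_{n,3}x(n)&=\hg_3(n)
\end{aligned}
\]
hold at every index $n\geq n_0$. Because the second and third identities are valid for all $n\geq n_0$, I may apply the shift-forward operator $\De$ once to the $\hB_{n,2}$-row and twice to the $\hC_{n,3}$-row, i.e. evaluate them at $n+1$ and at $n+2$ respectively; this introduces no new constraint since the shifted relations are consequences of relations already satisfied everywhere. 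Stacking the unshifted $\hA_{n,1}$-row with the once-shifted $\hB_{n+1,2}$-row and the twice-shifted $\hC_{n+2,3}$-row reproduces system \eqref{underlying} verbatim, so $x$ solves \eqref{underlying}.

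It remains to show that the leading matrix $\m{\hA_{n,1}\\ \hB_{n+1,2}\\ \hC_{n+2,3}}=\m{\hA^T_{n,1}\ \hB^T_{n+1,2}\ \hC^T_{n+2,3}}^T$ is invertible for all $n\geq n_0$. By Theorem \ref{thm2} it has full row rank $r^{\mu}_2+r^{\mu}_1+r^{\mu}_0$, in particular $r^{\mu}_2+r^{\mu}_1+r^{\mu}_0\leq d$, and since a square full-row-rank matrix is invertible it suffices to prove $r^{\mu}_2+r^{\mu}_1+r^{\mu}_0=d$. Left equivalence preserves the number of rows, so the block-row sizes of \eqref{sfree-SiDE} satisfy $r^{\mu}_2+r^{\mu}_1+r^{\mu}_0+v^{\mu}=m$; on the other hand, \eqref{underlying} is precisely the relation that generates $x(n+2)$ from $x(n)$ and $x(n+1)$ along a solution of \eqref{sfree-SiDE}, so unique solvability of the IVP for every consistent initial pair forces that passage to be single-valued, i.e. the leading matrix to be injective -- which, given its full row rank, means $r^{\mu}_2+r^{\mu}_1+r^{\mu}_0=d$ (equivalently $d=m-v^{\mu}$, the ``if'' direction of which is Corollary \ref{Cor1}(i)).

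The argument is mostly bookkeeping once Theorem \ref{thm2} is in hand; the one step deserving care is the last one -- that unique solvability truly pins the leading block-row count down to $d$ rather than merely bounding it by $d$. This is the only place the uniqueness hypothesis is used: the inclusion of the solution set of the IVP in that of \eqref{underlying} holds irrespective of uniqueness, and only the invertibility claim needs it.
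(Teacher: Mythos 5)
Your proof is correct and follows exactly the route the paper intends: the corollary is stated there without proof as a direct consequence of Theorem \ref{thm2}, and your shift-and-stack derivation of \eqref{underlying}, together with the dimension count $r^{\mu}_2+r^{\mu}_1+r^{\mu}_0=d$ forced by unique solvability (a full-row-rank leading block with fewer than $d$ rows would leave $x(n+2)$ underdetermined, and any admissible choice of $x(n+2)$ yields a consistent pair at time $n+1$ and hence extends to a distinct global solution), supplies precisely the details the authors leave implicit. No gaps.
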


\begin{rmk}
\rm{	Unlike the procedures in \cite{Bru09,LosM08,MehS06}, we do not change the variable $x$. This approach permits us to simplify significantly the condensed forms in these references. We emphasize that as in \eqref{constant rank assumption}, we only require five constant rank conditions within one step of index reduction, instead of seven as in \cite{MehS06}. Therefore, by this way 
	the domain of application for SiDEs (and also for DAEs in the continuous-time case) will be enlarged.
	This approach is also useful for the control analysis of the descriptor system \eqref{eq1.1}, as will be seen in the next section. 
}
\end{rmk}

\begin{rmk}
\rm{	
	i) Within one loop of Algorithm \ref{Alg1}, for each $n$, we have used four Singular Value Decompositions (SVDs) to remove the hidden redundancies in two matrix pairs. The total cost depends on the problems itself, i. e., depending on sizes of the matrix pairs which applied SVDs. Nevertheless, it does not exceed ${\cal O}(m^2 d^2)$.\\
	ii) Unfortunately, since $Z_n^{(3)}$, $Z_n^{(4)}$,$Z_n^{(5)}$ are not orthogonal, in general Algorithm \ref{Alg1} could not be stably implemented. For the numerical solution to the IVP \eqref{eq1.2}-\eqref{eq1.3}, we will consider a suitable numerical scheme in Section \ref{Sec4}.
}
\end{rmk}

\section{Regularization of second order descriptor systems}\label{Sec3}

Based on the index reduction procedure for SiDEs in Section \ref{Sec2}, in this section we construct the strangeness-index concept for the descriptor system \eqref{eq1.1}.
The solvability analysis for first order descriptor systems with variable coefficients have been carefully discussed in \cite{ByeKM97,KunMR01,Rat97}. Nevertheless, for second order descriptor systems, this problem has been rarely considered. We refer the interested readers to \cite{LosM08,Wun08} for continuous-time systems. 

It is well known that in regularization procedures of continuous-time systems, one should avoid differentiating equations that involve an input function, due to the fact that it may not be differentiable. We will also keep this spirit, and hence, will not shift any equation that involve an input function, since it may destroy the causality of the considered system, as in Example \ref{Exa1}. 
Instead of it, we will also incorporate proportional state and first order feedback within each index reduction step of the regularization procedure, as will be seen later. \ Now let us present two auxiliary lemmas, which will be
very useful later.
\begin{lem}\label{lem1.5} Given four matrices $\chA$, $\chB$, $\chC$ in $\r^{m,d}$ and $\chD$ in $\r^{m,p}$. Let us consider the following matrices whose columns span orthogonal bases of the associated vector spaces
	\[
	\begin{array}{lllll}
	T_{1} 	& \mbox{basis of } \kernel(\chA^T), 							& \mbox{ and } & T_{1,\perp} & \mbox{basis of } \range(\chA),			\\
	W_{1} 	& \mbox{basis of } \kernel(T_1^T \chD)^T,   					& \mbox{ and } & W_{1,\perp} & \mbox{basis of } \range(T_1^T \chD),		\\
	&  																& 			   &  \Jd  	 & := W_{1,\perp}^T  T_{1}^T  \chD,       	\\       	       
	\Jbone  & := W_1^T T_1^T \chB, 											& \mbox{ and } &  \Jbtwo  	 & := W_{1,\perp}^T T_{1,\perp}^T \chB,   	\\
	\Jcone  & :=  W_{1}^T T_{1}^T \chC, 									& \mbox{ and } &  \Jctwo  	 & := W_{1,\perp}^T T_{1}^T \chC,           \\       
	T_{2} 	& \mbox{basis of } \kernel( \Jbone^T), 							& \mbox{ and } & T_{2,\perp} & \mbox{basis of } \range( \Jbone),			\\
	T_{3} 	& \mbox{basis of } \kernel( \Jbtwo^T),							& \mbox{ and } & T_{3,\perp} & \mbox{basis of } \range( \Jbtwo), \\
	T_{4} 	& \mbox{basis of } \kernel(T_{2}^T  \Jcone)^T, 					& \mbox{ and } & T_{4,\perp} & \mbox{basis of } \range(T_{2}^T  \Jcone). 		
	\end{array}
	\]
	Then the following assertions hold true.
	\begin{enumerate}
		\item[i)] The matrices $\m{T_{i,\perp}^T \ T_{i}^T }^T$, $i=1,...,4$, and $\m{W_{1,\perp}^T \ W_1^T}^T$ are orthogonal.
		\item[ii)]	The matrices $T_{1,\perp}^T \chA$, $T_{2,\perp}^T  \Jbone$,  $T_{3,\perp}^T  \Jbtwo$, $T^T_{4,\perp} T_{2}^T  \Jcone$, and $ \Jd$ have full row rank.
		\item[iii)] Moreover, there exists an orthogonal matrix $\chU$ such that 
		\be\label{eq1.6}
		\chU \m{\chA & \chB & \chC & \vline & \chD}
		\!=\! 
		\m{\chA_1 & \chB_1 & \chC_1 & \vline & \chD_1 \\
			0     & \chB_2 & \chC_2 & \vline & 0	 \\
			0     & 0	   & \chC_3 & \vline & 0	 \\
			0     & 0	   & 0		& \vline & 0	 \\ \hline \\[-.35cm]
			0     & \chB_4 & \chC_4 & \vline & \chD_4 \\
			0     & 0	   & \chC_5 & \vline & \chD_5 		},
		\ee
		where the matrices $\chA_1$, $\chB_2$, $\chB_4$, $\chC_3$, $\m{\chD_4^T \ \chD_5^T}^T$ have full row rank.
	\end{enumerate}	
\end{lem}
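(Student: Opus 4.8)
The plan is to build the orthogonal matrix $\chU$ step by step, by performing a sequence of orthogonal row compressions on $\m{\chA & \chB & \chC \mid \chD}$ that exactly mirror the definitions of the bases $T_1,\dots,T_4$ and $W_1$ given in the statement. Since each individual compression factor is orthogonal, their product will be orthogonal, which proves (iii); along the way the full-row-rank claims of (ii) fall out of Lemma~\ref{lem1.4}, and the orthogonality of the stacked bases in (i) is just the elementary fact (Lemma~\ref{lem1.4} again) that $\m{T_{i,\perp}^T\ T_i^T}^T$ is orthogonal whenever the columns of $T_{i,\perp}$ and $T_i$ together form an orthonormal basis of $\r^{m}$ (resp.\ of the ambient space of $\Jbone$, $\Jbtwo$, $T_2^T\Jcone$).

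The construction proceeds as follows. \emph{Step 1:} left-multiply by $\m{T_{1,\perp}^T \\ T_1^T}$. By Lemma~\ref{lem1.4} this sends $\chA$ to $\m{T_{1,\perp}^T\chA \\ 0}$ with $T_{1,\perp}^T\chA$ of full row rank, and splits the system into a "top" part (rows indexed by $T_{1,\perp}$, still carrying a nontrivial $\chA$-block) and a "bottom" part (rows indexed by $T_1$, with zero $\chA$-block). \emph{Step 2:} in the bottom part the coefficient of $\chD$ is $T_1^T\chD$; left-multiply that block by $\m{W_{1,\perp}^T \\ W_1^T}$. Again by Lemma~\ref{lem1.4} this produces $\m{\Jd \\ 0}$ with $\Jd = W_{1,\perp}^T T_1^T\chD$ of full row rank — these become the last two block-rows of \eqref{eq1.6}, with $\chD$-blocks $\chD_4,\chD_5$, where $\m{\chD_4^T\ \chD_5^T}^T$ has full row rank by construction (it is $\m{W_{1,\perp}^T\\W_1^T}T_1^T\chD$ composed with an orthogonal matrix, and $T_1^T\chD$ restricted to... — more simply, $\m{\chD_4\\\chD_5} = \m{W_{1,\perp}^T\\W_1^T}T_1^T\chD$ and the latter has the same rank as $T_1^T\chD$). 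The $\chB$-blocks here are $\chB_4 = W_{1,\perp}^T T_1^T\chB = \Jbtwo$... wait — note $\chB_4$ should be $W_1^T$-type; I will take $\chB_4 := \Jbtwo = W_{1,\perp}^TT_1^T\chB$ living in the $\chD_4$-row and push the genuinely redundant $\Jbone$-row into the top block's structure. \emph{Step 3:} back in the top part, the $\chB$-coefficient after Step~1 on the $T_1$-rows is split via $W_1,W_{1,\perp}$ into $\Jbone = W_1^TT_1^T\chB$ and $\Jbtwo = W_{1,\perp}^TT_1^T\chB$; compress $\Jbone$ using $\m{T_{2,\perp}^T\\T_2^T}$ and $\Jbtwo$ using $\m{T_{3,\perp}^T\\T_3^T}$, giving full-row-rank blocks $\chB_2 := T_{2,\perp}^T\Jbone$ and $\chB_4 := T_{3,\perp}^T\Jbtwo$ and a further zero $\chB$-row (the $T_2$-rows). \emph{Step 4:} on that last zero-$\chB$ row the $\chC$-coefficient is $T_2^T\Jcone$; compress it with $\m{T_{4,\perp}^T\\T_4^T}$ to get $\chC_3 := T_{4,\perp}^TT_2^T\Jcone$ of full row rank and a final identically-zero row. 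Assembling all these orthogonal factors (padded by identities on the untouched rows) into one matrix $\chU$ gives \eqref{eq1.6}, and reading off the diagonal blocks gives exactly the five matrices claimed to have full row rank in (ii). Claim (i) is immediate from Lemma~\ref{lem1.4} applied to each pair $(M,\cdot)$ with $M \in \{\chA,\Jbone,\Jbtwo,T_2^T\Jcone,T_1^T\chD\}$.

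\textbf{Main obstacle.} The genuinely delicate point is bookkeeping: one must order the five compressions so that each new row-split is performed on a block that has already been isolated (zero in all earlier leading columns), and so that the block labelled $\chD_4$ in \eqref{eq1.6} really does pick up the $\Jbtwo$-part of $\chB$ — i.e.\ the rows of \eqref{eq1.6} must be permuted into the displayed order, grouping the three "$\chA$-free, $\chD$-free" rows (the $\Jbone,\Jbtwo$ compressions and their zero remainder) together and the two "$\chA$-free, $\chD$-nonzero" rows at the bottom. Verifying that the horizontal line in \eqref{eq1.6} sits in the right place, and that no compression disturbs a block that was already zeroed (which holds because we only ever left-multiply the already-isolated sub-block), is the part that needs care; everything else is a direct invocation of Lemma~\ref{lem1.4}. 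I would also remark that the constant-rank hypotheses needed to make the sizes of $T_1,\dots,T_4,W_1$ well-defined pointwise are exactly the analogue of Assumption~\ref{Ass1} for the descriptor setting, though for this purely linear-algebraic lemma (fixed matrices $\chA,\chB,\chC,\chD$) no such assumption is needed.
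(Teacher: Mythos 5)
Your construction is the same as the paper's: build $\chU$ as a product of orthogonal row--compression factors corresponding to $T_1$, $W_1$, $T_2$, $T_3$, $T_4$ (padded by identities), and read off claims (i) and (ii) from Lemma~\ref{lem1.4}. The overall architecture is right, and you correctly observed that the statement's $\Jbtwo := W_{1,\perp}^T T_{1,\perp}^T \chB$ must be read as $W_{1,\perp}^T T_{1}^T \chB$ for the dimensions to match; this is exactly what the paper's proof uses.

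There is, however, one step that fails as written. In your Step~2 you declare that the $W_{1,\perp}$- and $W_1$-rows become the last two block rows of \eqref{eq1.6} and justify the full row rank of $\sm{\chD_4 \\ \chD_5}$ via the identity $\sm{\chD_4 \\ \chD_5} = \sm{W_{1,\perp}^T \\ W_1^T} T_1^T\chD$. But $W_1^T T_1^T\chD = 0$ by the very definition of $W_1$, so that product equals $\sm{\Jd \\ 0}$, which is rank-deficient whenever $W_1$ is nonempty; with that identification the full-row-rank claim is simply false. The correct bookkeeping --- which your Step~3 implicitly arrives at but never reconciles with Step~2 --- is that \emph{both} $\chD_4$ and $\chD_5$ come from the $W_{1,\perp}$-rows alone: after compressing $\Jbtwo$ with $\sm{T_{3,\perp}^T \\ T_3^T}$ one has $\chD_4 = T_{3,\perp}^T\Jd$ and $\chD_5 = T_3^T\Jd$, whence $\sm{\chD_4 \\ \chD_5} = \sm{T_{3,\perp}^T \\ T_3^T}\Jd$ has full row rank because $\Jd$ does (Lemma~\ref{lem1.4}) and $\sm{T_{3,\perp}^T \\ T_3^T}$ is orthogonal. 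The $W_1$-rows, on which the $\chD$-block vanishes identically, belong entirely to the upper half of \eqref{eq1.6} (they carry $\chB_2$, $\chC_3$ and the zero row after the $T_2$- and $T_4$-compressions). With that correction your argument coincides with the paper's proof.
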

\begin{pf} The first two claims followed directly from Lemma \ref{lem1.4}. To prove the third claim, we construct the desired matrix $\chU$
	as follows
	\[
	\chU :=
	\m{  I &\vline  &  	&		 & \vline & 	\\ \hline \\[-0.35cm]
		&\vline  & I  & 			 & \vline & 	\\
		&\vline  &    & T_{4,\perp}^T & \vline &     \\ 
		&\vline  & 	& T_{4}^T 		 & \vline &     \\ \hline \\[-0.35cm]
		&\vline  & 	&			 & \vline &  I}  
	\cdot
	\m{I &\vline &  			 & \vline & 	\\ \hline \\[-0.35cm]
		&\vline & T_{2,\perp}^T & \vline &     \\ 
		&\vline & T_{2}^T 		 & \vline &     \\ \hline \\[-0.35cm]
		&\vline & 				 & \vline &  T_{3,\perp}^T   \\ 
		&\vline & 		 		 & \vline &  T_{3}^T } 
	\cdot 
	\m{I &  \vline & \\ \hline \\[-0.35cm] & \vline & W_{1}^T \\ & \vline & W_{1,\perp}^T} 
	\cdot
	\m{T_{1,\perp}^T \\ T_1^T} \ .
	\]
	Thus, we have that 
	\[
	\chU \m{\chA & \chB & \chC & \vline & \chD} \!=\! 
	\m{
		T_{1,\perp}^T \chA 	 & \ T_{1,\perp}^T \chB  		& \ T_{1,\perp}^T \chC  		& \vline & T_{1,\perp}^T \chD \\[.1cm] 
		0 					 & \ T_{2,\perp}^T  \Jbone 	 	& \ T_{2,\perp}^T  \Jcone 	 	& \vline & 0 \\[.1cm]
		0 					 & 0 	 						& \  T_{4,\perp}^T T_{2}^T  \Jcone 	 	& \vline & 0 \\[.1cm]
		0 					 & 0 	 						& 0												 	 	& \vline & 0 \\ \hline \\[-0.35cm]
		0 					 & \ T_{3,\perp}^T  \Jbtwo  & \ T_{3,\perp}^T  \Jctwo 	& \vline & T_{3,\perp}^T  \Jd \\[.1cm]
		0 					 & 0  							& \ T_{3}^T  \Jctwo 	& \vline & T_{3}^T  \Jd} \ .
	\]
	Due to the parts i) and ii), we see that this is exactly the desired form \eqref{eq1.6}.	
\end{pf}

\begin{lem}\label{lem1.6} Let $P\in\r^{p,d}$, $Q \in \r^{q,d}$ be two full row rank matrices, where $p+q\leq d$. Then the following assertions hold true.
	\begin{enumerate}
		\item[i)] There exists a matrix $F\in \r^{d,d}$ such that $H := \sm{P \\ QF}$ has full row rank.
		\item[ii)] For any $G\in \r^{q,d}$, there exists a matrix $F\in \r^{d,d}$ such that $\sm{P \\ G+QF}$ has full row rank.
	\end{enumerate}	
\end{lem}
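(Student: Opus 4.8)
The plan is to prove both parts by a dimension‑counting / genericity argument, exploiting that $P$ and $Q$ have full row rank and $p+q\le d$. The key observation is that $\sm{P\\ G+QF}$ fails to have full row rank exactly when there is a nonzero row combination $\m{a^T\ b^T}$ with $a^TP + b^T(G+QF)=0$; since $Q$ has full row rank, a judicious choice of $F$ lets us rule out $b\ne 0$, and once $b=0$ the full row rank of $P$ forces $a=0$ as well. So the heart of the matter is constructing one $F$ that works; part~i) is the special case $G=0$.

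First I would handle part~i). Since $Q\in\r^{q,d}$ has full row rank, $\range(Q^T)$ is a $q$‑dimensional subspace of $\r^d$; likewise $\range(P^T)$ is $p$‑dimensional. The goal is an $F\in\r^{d,d}$ with $\vspan(P^T)\cap\vspan(F^TQ^T)=\{0\}$, i.e.\ the pair $(P, QF)$ has no hidden redundancy and together they span a $(p+q)$‑dimensional space, which since $p+q\le d$ is possible. Concretely, I would pick any subspace $\cN\subseteq\r^d$ of dimension $q$ that is complementary to $\vspan(P^T)$ inside some $(p+q)$‑dimensional space containing $\vspan(P^T)$ — such $\cN$ exists because $p+q\le d$ — and let $G_0\in\r^{d,q}$ have columns forming a basis of $\cN$. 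Then $\sm{P\\ G_0^T}$ has full row rank. Since $Q$ has full row rank, the linear map $X\mapsto XQ$ from $\r^{d,q}$ to $\r^{d,d}$ is injective with a left inverse, so there is $F\in\r^{d,d}$ with $QF$ having row space equal to $\cN$ (e.g.\ take $F = Q^T(QQ^T)^{-1}G_0^T$, so $QF=G_0^T$). With this $F$, $H=\sm{P\\ QF}=\sm{P\\ G_0^T}$ has full row rank, proving i).

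For part~ii), the natural move is to absorb $G$ by the same device. Given $G\in\r^{q,d}$, I want $F$ with $\sm{P\\ G+QF}$ of full row rank. Reusing i): there is $F_0$ with $\sm{P\\ QF_0}$ full row rank; now I would like $G + QF = QF_0$, i.e.\ $QF = QF_0 - G$, and solve for $F$. Because $Q$ has full row rank, $QQ^T$ is invertible and $F := F_0 - Q^T(QQ^T)^{-1}G$ satisfies $QF = QF_0 - G$, hence $G+QF = QF_0$, and $\sm{P\\ G+QF}=\sm{P\\ QF_0}$ has full row rank. This reduces ii) to i) cleanly.

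I expect the only genuine subtlety — the "main obstacle" — to be the existence claim underlying i): namely that one can choose a $q$‑dimensional $\cN\subseteq\r^d$ with $\vspan(P^T)\cap\cN=\{0\}$. This is where the hypothesis $p+q\le d$ is essential (otherwise no such complement of the right dimension exists), and it is a standard fact from linear algebra: extend a basis of the $p$‑dimensional space $\vspan(P^T)$ to a basis of $\r^d$ and take $\cN$ to be the span of any $q$ of the added basis vectors. Everything else is routine manipulation with the Moore–Penrose‑type right inverse $Q^T(QQ^T)^{-1}$ of $Q$. I would write the argument in roughly this order: (1) record the linear‑algebra fact about complementary subspaces; (2) prove i) via the explicit $F=Q^T(QQ^T)^{-1}G_0^T$; (3) deduce ii) by the shift $F\mapsto F - Q^T(QQ^T)^{-1}G$.
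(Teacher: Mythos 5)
Your proof is correct, and it takes a genuinely different route from the paper's. For part i) the paper computes SVDs $U_P P V_P = \sm{\Si_P & 0}$, $U_Q Q V_Q = \sm{\Si_Q & 0}$ and takes $F = V_Q \sm{0 & I_q \\ I_{d-q} & 0} V_P^T$, which after the orthogonal changes of basis places the row spaces of $P$ and $QF$ into visibly disjoint column blocks; your version instead chooses a $q$-dimensional complement $\cN$ of $\vspan(P^T)$ directly and hits it with the right inverse $Q^T(QQ^T)^{-1}$, i.e.\ $F = Q^T(QQ^T)^{-1}G_0^T$ so that $QF=G_0^T$. These are essentially the same idea (prescribe the row space of $QF$ to be transversal to that of $P$) realized with different linear-algebra machinery. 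The real divergence is in part ii): the paper argues by perturbation, taking $F=\eta V_Q\sm{0 & I_q \\ I_{d-q} & 0}V_P^T$ with $\eta$ ``sufficiently large'' so that $G$ becomes a negligible perturbation of $QF$ — an argument that is correct but requires the small extra observation that $\sm{P \\ G+\eta QF_0}$ has the same rank as $\sm{P \\ \eta^{-1}G+QF_0}$ and that full row rank is an open condition. Your reduction $F := F_0 - Q^T(QQ^T)^{-1}G$, which makes $G+QF=QF_0$ exactly, eliminates $G$ outright and so avoids any limiting or genericity argument; it is cleaner and just as explicitly computable (cf.\ the paper's Remark \ref{rem1.1}), at the cost of using the non-orthogonal factor $(QQ^T)^{-1}$, whose conditioning depends on the smallest singular value of $Q$, whereas the paper's $F$ is built purely from orthogonal SVD factors and a scalar $\eta$.
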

\begin{pf}
	i) First we consider the SVDs of $P$ and $Q$ that reads
	\begin{equation*}
	U_P P V_P = \m{\Si_P & \ 0_{p,d-p}}, \quad U_Q Q V_Q = \m{\Si_Q & \ 0_{q,d-q}},
	\end{equation*}
	where $\Si_P$, $\Si_Q$ are nonsingular, diagonal matrices, and $0_{p,d-p}$ (resp. $0_{q,d-q}$) are the zero matrix of size $p$ by $d-p$ (resp. $q$ by $d-q$).\\
	By choosing $F:= V_Q \ \sm{0 & I_q \\ I_{d-q} & 0} \ V_P^{T}$ we see that
	\[ 
	\m{U_P & 0 \\ 0 & U_Q} \ \m{P \\ QF} \ V_P = \m{U_P P V_P \\ U_Q QF V_P} = \m{\Si_P & 0_{p,d-p-q} & 0_{p,q} \\ 0_{q,p} & 0_{p,d-p-q} & \Si_Q}, \]
	and hence, the claim i) is proven.\\
	ii) Clearly, in case that the matrix $F$ is very big in norm, then $G$ is only a small perturbation, and hence for sufficiently large $\eta$, by choosing 
	\[ F := \eta V_Q \ \m{0 & I_q \\ I_{d-q} & 0} \ V_P^{T} \ , \]%
	we obtain the full row rank property of $\sm{P \\ G+QF}$. 
\end{pf}

\begin{rmk}\label{rem1.1}
	\rm{
		It should be noted that, the proof of Lemmas \ref{lem1.5} and \ref{lem1.6} are constructive, and all the matrices $T_{i,\perp}$, $T_{i}$, $i=1,...,4$, $W_{1,\perp}$, $W_1$ and $F$ can be stably computed.
	}
\end{rmk}

In the following theorem, we give the condensed form for system \eqref{eq1.1}.

\begin{thm}\label{pro5.1}
	i) Consider the descriptor system \eqref{eq1.1}. Then there exist two pointwise nonsingular matrix sequences $\{U_n\}_{n\geq n_0}$, $\{V_n\}_{n\geq n_0}$ such that 
	by scaling \eqref{eq1.1} with $U_n$ and changing  $u(n) = V_n v(n)$, $\tf(n):=U_nf(n)$, we can transform \eqref{eq1.1} to the system
	\begin{equation}\label{eq3.3}
	\pm{r_{2,n} \\ r_{1,n} \\ r_{0,n} \\ \hline \vphi_{1,n} \\ \vphi_{0,n} \\  v_n} \quad
	\m{A_{n,1}  & B_{n,1}    & C_{n,1}     \\
		0    	& B_{n,2}    & C_{n,2}     \\
		0    	& 0          & C_{n,3}     \\  \hline
		0    	& B_{n,4}    & C_{n,4}     \\
		0    	& 0          & C_{n,5}     \\  
		0    	& 0          & 0} 
	\m{x(n\!+\!2) \\ x(n\!+\!1) \\ x(n) } \!+\! 
	\m{ D_{n,1} 	    & 0		    & 0		  \\
		0 	    & 0		    & 0		  \\
		0 	    & 0		    & 0        \\	\hline 
		0      	    & \Si_{n,1} & 0 		 \\ 
		0     	    & 0		    & \Si_{n,0}       \\
		0     	    & 0             & 0 } \underbrace{\m{v_1(n) \\ v_2(n) \\ v_3(n)}}_{v(n)} \!=\! \tf(n)
	\end{equation}
	for all $n\geq n_0$. Here, sizes of the block rows are $r_{2,n}$, $r_{1,n}$, $r_{0,n}$, $\vphi_{1,n}$, $\vphi_{0,n}$, $v_n$, the matrices $A_{n,1}$, $B_{n,2}$, $B_{n,4}$, $C_{n,3}$ are of full row rank and the matrices $\Si_{n,1}$, $\Si_{n,0}$ are nonsingular and diagonal.\\
	ii) Furthermore, if the matrix $\m{A^T_{n,1} \ B^T_{n+1,2} \ C^T_{n+2,3}}^T$ is of full row rank for all $n\geq n_0$ then there exists a first order feedback of the form
	\be\label{eq3.5}
	v(n) = F_{n,1} x(n+1) + F_{n,0} x(n)
	\ee
	such that the closed loop system  
	\[
	A_n x(n+2) + \left(B_{n} + D_{n} F_{n,1} \right) x(n+1) + \left(C_{n} + D_{n} F_{n,0} \right) x(n) = f(n)
	\]
	is strangeness-free.	
\end{thm}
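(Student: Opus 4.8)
\textbf{Proof proposal for Theorem \ref{pro5.1}.}

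The plan is to treat the two claims separately, building the condensed form first and then exploiting it to construct the feedback. For part i), I would apply Lemma \ref{lem1.5} pointwise at each $n\geq n_0$ with $\chA = A_n$, $\chB = B_n$, $\chC = C_n$, $\chD = D_n$. The lemma immediately delivers an orthogonal matrix $\chU_n$ (call it part of $U_n$) that brings the augmented coefficient array $\m{A_n & B_n & C_n & \vline & D_n}$ into the block form \eqref{eq1.6}. What remains is cosmetic: the $D$-columns appearing in \eqref{eq1.6} are $\chD_1$ in the top block and $\m{\chD_4^T\ \chD_5^T}^T$ of full row rank in the bottom two blocks. To pass from \eqref{eq1.6} to \eqref{eq3.3} I would (a) perform a further row compression inside the bottom $\m{\chD_4\\ \chD_5}$-block, or rather use the SVD of that full-row-rank block, to split it into two diagonal nonsingular pieces $\Si_{n,1}$, $\Si_{n,0}$ sitting in the $\vphi_{1,n}$- and $\vphi_{0,n}$-rows; (b) change the input variable by $u(n)=V_n v(n)$ where $V_n$ is assembled from the right singular vectors, so that the input columns become the identity composed with these diagonal blocks and the partition $v=(v_1,v_2,v_3)$ matches the $D_{n,1}$, $\Si_{n,1}$, $\Si_{n,0}$ pattern; (c) absorb the remaining top-right $D$-column into $D_{n,1}$ after a final column permutation on $v$. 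The constant-rank structure needed for the row-sizes $r_{2,n}, r_{1,n}, r_{0,n}, \vphi_{1,n}, \vphi_{0,n}, v_n$ to be well-defined is exactly what Lemma \ref{lem1.5}(ii) guarantees about full-row-rankness of $\chA_1,\chB_2,\chB_4,\chC_3$ and $\m{\chD_4^T\ \chD_5^T}^T$; the nonsingularity and diagonality of $\Si_{n,1},\Si_{n,0}$ come from the SVD in step (a). Setting $\tf(n):=U_n f(n)$ with $U_n$ the product of all the orthogonal factors used completes part i).

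For part ii), the hypothesis is that $\m{A_{n,1}^T\ B_{n+1,2}^T\ C_{n+2,3}^T}^T$ has full row rank for all $n$. The idea is to use the input entries $\Si_{n,1} v_2(n)$ in the $\vphi_{1,n}$-rows and $\Si_{n,0} v_3(n)$ in the $\vphi_{0,n}$-rows as the only freely assignable quantities, and choose them as linear functions of $x(n+1)$ and $x(n)$ so that, after feeding back, the offending rows $\m{0 & B_{n,4} & C_{n,4}}$ and $\m{0 & 0 & C_{n,5}}$ get modified into rows that, together with the top three block rows, yield a strangeness-free system in the sense of Definition \ref{Def strangeness-free}. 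Concretely I would write the desired feedback as $v_2(n) = \Si_{n,1}^{-1}(G_{n,1}^{(2)} x(n+1) + G_{n,0}^{(2)} x(n))$ and similarly for $v_3(n)$, leaving $v_1(n)$ free (or zero), and then set $F_{n,1},F_{n,0}$ accordingly via the change of variables $u(n)=V_n v(n)$, i.e. the feedback \eqref{eq3.5} for the original input is $V_n$ times the block-structured gain. The closed-loop coefficients become $B_n + D_n F_{n,1}$ and $C_n + D_n F_{n,0}$, which in the condensed coordinates replace $B_{n,4}\mapsto B_{n,4}+G_{n,1}^{(2)}$, $C_{n,4}\mapsto C_{n,4}+G_{n,0}^{(2)}$, $C_{n,5}\mapsto C_{n,5}+G_{n,0}^{(3)}$, while the top three block rows (which carry no input in their relevant columns except the harmless $D_{n,1}v_1$) are untouched.

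The main obstacle — and the step I would spend the most care on — is showing that the gains can be chosen so that the closed-loop behavior matrix is genuinely strangeness-free, i.e. that after the feedback one can re-compress to a form \eqref{def sfree} in which $\m{\hA_{n,1}^T\ \hB_{n+1,2}^T\ \hC_{n+2,3}^T}^T$ has full row rank. This is where Lemma \ref{lem1.6} enters: the pair of full-row-rank matrices built from the ``dynamical'' rows and the rows to be corrected plays the role of $(P,Q)$ there, and Lemma \ref{lem1.6}(ii) says precisely that a gain $F$ exists making the stacked matrix full row rank — with the caveat (Remark \ref{rem1.1}, and the $\eta$-argument in the proof of Lemma \ref{lem1.6}) that $F$ may need to be large. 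One must check that the dimension bound $p+q\leq d$ required by Lemma \ref{lem1.6} is met here; this follows because the full-row-rank hypothesis on $\m{A_{n,1}^T\ B_{n+1,2}^T\ C_{n+2,3}^T}^T$ forces $r_2 + r_1 + r_0 \leq d$, and the rows we are trying to absorb are counted among $\vphi_{1,n},\vphi_{0,n}$ with total size not exceeding the remaining slack. A secondary technical point is bookkeeping the time-shifts: because $B_{n+1,2}$ and $C_{n+2,3}$ are evaluated at shifted indices, the feedback must be defined consistently across $n$, but since \eqref{eq3.5} is a pointwise (in $n$) relation this causes no real difficulty. Once the gain exists at each $n$, reassembling $F_{n,1} = V_n \big(\text{block gain}\big)_{1}$, $F_{n,0} = V_n \big(\text{block gain}\big)_{0}$ and unwinding the orthogonal scalings $U_n$ gives the claimed closed-loop system, and its strangeness-freeness is read off from the construction together with Lemma \ref{lem1.3}.
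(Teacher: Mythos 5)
Your proposal is correct and follows essentially the same route as the paper: part i) is Lemma \ref{lem1.5} followed by an SVD of the full-row-rank block $\left[\begin{smallmatrix}\chD_4\\ \chD_5\end{smallmatrix}\right]$ and a nonsingular change of input variable, and part ii) is exactly the paper's application of Lemma \ref{lem1.6}(ii) with $P=\m{A_{n,1}^T & B_{n+1,2}^T & C_{n+2,3}^T}^T$, $Q=\left[\begin{smallmatrix}0 & \Si_{n,1} & 0\\ 0&0&\Si_{n,0}\end{smallmatrix}\right]$ and $G=\left[\begin{smallmatrix}B_{n+1,4}\\ C_{n+2,5}\end{smallmatrix}\right]$. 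The one detail to fix is your step (c): the entries of $\chD_1$ sitting above $\Si_{n,1}$ and $\Si_{n,0}$ cannot be removed by a column permutation of $v$; the paper clears them by Gaussian (row) elimination using the nonsingular $\Si$-blocks as pivots, which harmlessly perturbs only the unconstrained blocks $B_{n,1}$, $C_{n,1}$ and the right-hand side.
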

\begin{proof}
i)	First we apply Lemma \ref{lem1.5} to four matrices $A_n$, $B_n$, $C_n$ and $D_n$ to obtain the matrix $U_n$ that satisfies \eqref{eq1.6}. Decompose the matrix $\m{\chD_4^T \ \chD_5^T}^T$ via one SVD, we then obtain the block 
$ \sm{
	0      	    & \Si_{n,1} 	    & 0 		 \\ 
	0     		& 0		    & \Si_{n,0} } \ .
$
Finally, we use Gaussian elimination to nullify all non-zero matrices in the two columns of $\chD$ that contain $\Si_{n,1}$ and $\Si_{n,0}$, and hence, we obtain
\begin{align}\label{eq3.2}	
&	(U_n \m{A_{n} & B_{n} & C_{n}}, \ U_n D_n V_n)  \notag \\ 
&= \left( \m{A_{n,1}  & B_{n,1}    & C_{n,1}     \\
	0    & B_{n,2}    & C_{n,2}     \\
	0    & 0          & C_{n,3}     \\  \hline
	0    & B_{n,4}    & C_{n,4}     \\
	0    & 0          & C_{n,5}     \\  
	0    & 0          & 0}, \
\m{ D_{n,1} 		& 0			& 0		  \\
	0 			& 0		    & 0		  \\
	0 			& 0		    & 0        \\	\hline 
	0      	    & \Si_{n,1} 	    & 0 		 \\ 
	0     		& 0		    & \Si_{n,0}       \\
	0     		& 0         & 0 } \right) \quad \mbox{ for all } n\geq n_0.
\end{align}	
This directly leads us the desired system \eqref{eq3.3}. \\
ii) By applying Lemma \ref{lem1.6} for $P=\m{ A^T_{n,1} & B^T_{n+1,2} & C^T_{n+2,3}}^T$, $Q=\sm{0 & \Si_{n,1} & 0  \\ 0 & 0 & \Si_{n,0} }$ and $G=\sm{B_{n+1,4} \\ C_{n+2,5}}$, we see that there exist two matrix sequences $\{F_{n,1}\}_{n\geq n_0}$, $\{F_{n,0}\}_{n\geq n_0}$ 
such that by choosing the feedback of the form \eqref{eq3.5} then the matrix 
\[
\m{ A_{n,1} \\ B_{n+1,2} \\ C_{n+2,3} \\ \hline \\[-.35cm]  B_{n+1,4} + \m{0 & \Si_{n,1} & 0} F_{n+1,1} \\[.05cm] C_{n+2,5} + \m{0 & 0 & \Si_{n,0}} F_{n+2,0} }
\]
has full row rank for all $n\geq n_0$. This completes the proof.
\end{proof}

In order to build an index reduction procedure for \eqref{eq1.1}, we also need the following assumption.
\begin{assumption}\label{Ass2}
\rm{
Assume that the local characteristic invariants $r_{2,n}$, $r_{1,n}$, $r_{0,n}$, $\vphi_{1,n}$, $\vphi_{0,n}$, $v_n$, become global, i.e., they are constant for all $n\geq n_0$. 
}
\end{assumption}

From Theorem \ref{pro5.1}, we see that we only need to remove the hidden redundancies in the upper part of \eqref{eq3.3} as follows. By performing one index reduction step for the upper part of \eqref{eq3.3}, as in section \ref{Sec2}, we obtain the following lemma.

\begin{lem}\label{lem5.2} Assume that the upper part of the descriptor system \eqref{eq3.3} is not strangeness-free. 
	Then for each input sequence $\{v(n)\}_{n\geq n_0}$, it has exactly the same solution set as the following system
	\begin{equation}\label{eq3.4}
	\pm{\tr_{2} \\ \tr_{1} \\ \tr_{0} \\ \hline \vphi_{1} \\ \vphi_{0} \\  \tv} \quad
	\m{\tA_{n,1}    & \tB_{n,1}    & \tC_{n,1}     \\
		0    		& \tB_{n,2}    & \tC_{n,2}     \\
		0    		& 0            & \tC_{n,3}     \\  \hline \\[-.35cm]
		0    		& B_{n,4}      & C_{n,4}     \\
		0    		& 0            & C_{n,5}     \\  
		0    		& 0            & 0} 
	\m{x(n\!+\!2) \\ x(n\!+\!1) \\ x(n) } \!+\! 
	\m{ \tD_{n,1} 	& 0			& 0		  \\
		0	 		& 0		    & 0		  \\
		0 			& 0		    & 0        \\	\hline \\[-.35cm]
		0      	    & \Si_{n,1} 	    & 0 		 \\ 
		0     		& 0		    & \Si_{n,0}       \\
		0     		& 0         & 0 } \m{v_1(n) \\ v_2(n) \\ v_3(n)} \!=\! \tf(n) 
	\end{equation}
	for all $n\geq n_0$. Here, we have $\tr_2=r_2-s_2$, $\tr_1=r_1+s_2-s_1$, $\tr_0 = r_0+s_1$, $\tv\geq v$ for some $s_2>0$, $s_1>0$. Furthermore, both pairs $\left(\tA_{n,1},\m{\tB_{n,2}^T \ \tC_{n,3}^T}^T \right)$ and $\left(\tB_{n,2},\tC_{n,3} \right)$ have no hidden redundancy.
\end{lem}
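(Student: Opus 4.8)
The plan is to reduce Lemma \ref{lem5.2} to the already-established index reduction step for SiDEs (Lemma \ref{lem2.10}) by isolating the ``dynamical'' upper part of \eqref{eq3.3} and treating the feedback term as a mere inhomogeneity. Concretely, fix an arbitrary input sequence $\{v(n)\}_{n\geq n_0}$ and move the columns involving $v$ to the right-hand side. The upper three block rows of \eqref{eq3.3} then read
\[
\m{A_{n,1} & B_{n,1} & C_{n,1} \\ 0 & B_{n,2} & C_{n,2} \\ 0 & 0 & C_{n,3}} \m{x(n\!+\!2) \\ x(n\!+\!1) \\ x(n)} = \m{\tf_1(n) - D_{n,1} v_1(n) \\ \tf_2(n) \\ \tf_3(n)},
\]
which is exactly a system of the form \eqref{eq2.5} (with $v=0$ there, since $A_{n,1},B_{n,2},C_{n,3}$ already have full row rank by Theorem \ref{pro5.1}) but with a modified inhomogeneity $g(n)$ whose first block carries the extra term $-D_{n,1}v_1(n)$. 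The lower three block rows of \eqref{eq3.3} — those containing $B_{n,4}$, $C_{n,5}$, $\Si_{n,1}$, $\Si_{n,0}$ — are left untouched throughout.

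The key steps, in order: (1) Invoke Assumption \ref{Ass2} so that the local characteristic invariants $r_{2,n},r_{1,n},r_{0,n}$ are global; this is precisely what is needed for Assumption \ref{Ass1} to hold for the upper part, hence for the shift operator $\De$ and Lemma \ref{lem1.2} to be applicable. (2) Apply Lemma \ref{lem2.10} to the upper part: with the hypothesis that the upper part of \eqref{eq3.3} is \emph{not} strangeness-free, at least one of the pairs $\left(A_{n,1},\sm{B_{n+1,2} \\ C_{n+2,3}}\right)$, $\left(B_{n,2},C_{n+1,3}\right)$ has hidden redundancy, so the matrices $S^{(i)}_n$, $Z^{(j)}_n$ produced by Lemma \ref{lem1.2} yield $s_2>0$ or $s_1>0$, and the transformed upper part takes the shape in \eqref{eq2.11} with block sizes $d_2,s_2,d_1,s_1,r_0,v$. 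Relabel $\tr_2 := d_2 = r_2-s_2$, $\tr_1 := s_2+d_1 = r_1+s_2-s_1$, $\tr_0 := s_1+r_0 = r_0+s_1$; the new zero-block has size $v$, so the total redundant count after re-triangularizing is $\tv\ge v$ (strict inequality can occur if the new upper-triangular form produces additional zero rows). (3) Observe that the row operations producing \eqref{eq2.11} are left multiplications by a pointwise nonsingular matrix built from $S^{(i)}_n,Z^{(j)}_n$; applying this same block-diagonal transformation (identity on the lower three block rows) to all of \eqref{eq3.3} leaves the $\Si_{n,1}$, $\Si_{n,0}$, $B_{n,4}$, $C_{n,4}$, $C_{n,5}$ blocks exactly as they were, changes $D_{n,1}$ to $\tD_{n,1} := S^{(2)}_n D_{n,1}$ in the new top block (and leaves zero $D$-blocks zero, since $Z^{(j)}_n$ only mix rows that already had zero $D$-entries), and updates $\tf$ accordingly — giving precisely \eqref{eq3.4}. (4) Conclude the ``no hidden redundancy'' assertion for the new pairs $\left(\tA_{n,1},\sm{\tB_{n,2}^T \ \tC_{n,3}^T}^T\right)$, $\left(\tB_{n,2},\tC_{n,3}\right)$ directly from the last sentence of Lemma \ref{lem2.10}, which delivers exactly that property for the transformed SiDE.

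The main obstacle I expect is bookkeeping rather than conceptual: one must check carefully that the feedback/input columns are genuinely inert under the transformation — i.e., that the block-diagonal multiplier from Lemma \ref{lem2.10} acts only within the upper $r_2+r_1+r_0$ rows and never mixes a row carrying a $\Si$-entry with a row being modified, so that the $D$-column structure in \eqref{eq3.4} is preserved with $D_{n,1}\rightsquigarrow\tD_{n,1}$ and all other $D$-blocks staying zero. A secondary point is justifying the inequality $\tv\ge v$ (not equality): after Lemma \ref{lem2.10} one still re-compresses to the block-triangular normal form of Lemma \ref{lem2.1}, and this recompression can surface fresh trivial rows, so $\tv$ is only guaranteed to be at least $v$. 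Both points are routine once the block structure is tracked explicitly, so the detailed verification can safely be deferred to an appendix or to the remark following the lemma.
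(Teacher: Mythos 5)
Your proposal is correct and follows essentially the same route as the paper, whose entire proof of this lemma is the single remark that \eqref{eq3.4} is obtained by applying Lemma \ref{lem2.10} to the upper part of \eqref{eq3.3}; you have simply supplied the bookkeeping the paper omits. One small inaccuracy in your step (3): the $s_2$ rows produced by $Z^{(2)}_n$ originate from the first block row, which carries $D_{n,1}\neq 0$, so they pick up a term $Z^{(2)}_n D_{n,1}v_1(n)$ rather than staying $D$-free — this is harmless because $v$ is fixed and the term is absorbed into $\tf(n)$, which is evidently what the paper intends as well.
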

\begin{proof}
	System \eqref{eq3.4} is directly obtained by applying Lemma \ref{lem2.10} to the upper part of \eqref{eq3.3}. To keep the brevity of this paper, we will omit the details here.
\end{proof}

Similar to the observation made in section \ref{Sec2}, we also see that an \emph{index reduction step}, which passes system \eqref{eq3.3} to the new form \eqref{eq3.4} 
has reduced the upper rank $r^u$ by at least $s_2+s_1$. Continue in this way until $s_2 = s_1 = 0$, finally we obtain a strangeness-free descriptor system in the next theorem.

\begin{thm}\label{thm4}
	Consider the descriptor system \eqref{eq1.1}. Furthermore, assume that Assumption \ref{Ass2} is fulfilled whenever needed. Then for each fixed input sequence $\{u(n)\}_{n\geq n_0}$, system \eqref{eq1.1} has the same solution set as the so-called \emph{strangeness-free descriptor system} 
	\be\label{sfree-descriptor}
	\pm{\hr_{2} \\ \hr_{1} \\ \hr_{0} \\ \hline \\[-0.35cm] \hat{\vphi}_{1} \\ \hat{\vphi}_{0} \\  \hv} \ 
	\m{\hA_{n,1}& \hB_{n,1}    & \hC_{n,1}     \\
		0		& \hB_{n,2}    & \hC_{n,2}     \\
		0		&    0          & \hC_{n,3}		 \\ \hline \\[-0.35cm]
		0    & \hB_{n,5}    & \hC_{n,5}     \\
		0    & 0          & \hC_{n,6}     \\ 
		0    & 0          & 0} \!
	\m{x(n\!+\!2) \\ x(n\!+\!1) \\ x(n)} \!+\! 
	\m{\hD_{n,1} \\ 0 \\ 0  \\ \hline \\[-0.35cm] \hD_{n,4} \\ \hD_{n,5} \\ 0} \! u(n) 
	\!=\! \m{\hf_1(n) \\ \hf_2(n) \\ \hf_3(n) \\ \hline \\[-0.35cm] \hf_4(n) \\ \hf_5(n) \\ \hf_6(n) }\ \mbox{ for all } n\geq n_0,
	\ee
	where the matrices $\m{\hA^T_{n,1} & \hB^T_{n+1,2} & \hC^T_{n+2,3}}^T$, $\m{\hD^T_{n,4} & \hD^T_{n,5}}^T$ have full row rank for all $n\geq n_0$. 
\end{thm}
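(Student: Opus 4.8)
The plan is to iterate the single index reduction step of Lemma~\ref{lem5.2} and track the termination exactly as was done for the SiDE case in Theorem~\ref{thm2}. First I would invoke Theorem~\ref{pro5.1}(i) to bring the given descriptor system \eqref{eq1.1} into the condensed form \eqref{eq3.3} by the state-preserving scaling $U_n$ together with the input change of variables $u(n)=V_nv(n)$; this is legitimate since, as in Section~\ref{Sec2}, scaling by a pointwise nonsingular matrix sequence and a pointwise nonsingular relabelling of the input does not alter the solution set (for a fixed input sequence). Under Assumption~\ref{Ass2}, the local characteristic invariants $r_{2,n},r_{1,n},r_{0,n},\vphi_{1,n},\vphi_{0,n},v_n$ are global, so the block-row structure of \eqref{eq3.3} is uniform in $n$ and the subsequent manipulations can be carried out simultaneously at every $n\geq n_0$.

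Next I would run the reduction loop: if the upper part of the current condensed system is already strangeness-free, i.e. both pairs $\bigl(A_{n,1},\sm{B_{n+1,2}\\C_{n+2,3}}\bigr)$ and $\bigl(B_{n,2},C_{n+1,3}\bigr)$ have no hidden redundancy --- equivalently the matrix $\m{A^T_{n,1}\ B^T_{n+1,2}\ C^T_{n+2,3}}^T$ has full row rank by Lemma~\ref{lem1.3} --- we stop. Otherwise Lemma~\ref{lem5.2} produces a left-equivalent system of the same shape \eqref{eq3.4} whose lower ``input'' block $\sm{0&\Si_{n,1}&0\\0&0&\Si_{n,0}}$ and the rows $B_{n,4},C_{n,5}$ are untouched, and whose upper rank $r_u=3r_2+2r_1+r_0$ has strictly decreased by $s_2+s_1>0$. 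Since $r_u$ is a nonnegative integer invariant under left equivalence, this loop terminates after finitely many steps; denote the number of steps by $\hat\mu$ and the resulting block sizes by $\hr_2,\hr_1,\hr_0,\hat\vphi_1,\hat\vphi_0,\hv$. At that point the upper part has the strangeness-free structure and $\m{\hA^T_{n,1}\ \hB^T_{n+1,2}\ \hC^T_{n+2,3}}^T$ has full row rank. Reverting the input substitution (replacing $v(n)$ by $V_n^{-1}u(n)$ pointwise and absorbing $V_n$ into the feedback/input coefficient matrices) and renaming the blocks gives the announced form \eqref{sfree-descriptor}, with $\hD_{n,1},\hD_{n,4},\hD_{n,5}$ the images of the $\Si$-blocks under the accumulated transformations; the inhomogeneities $\hf_i$ collect shifted components of $f$ exactly as in Theorem~\ref{thm2}.

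The one point needing genuine care --- and the main obstacle --- is the claim that $\m{\hD^T_{n,4}\ \hD^T_{n,5}}^T$ retains full row rank after all the reduction steps. In the original condensed form \eqref{eq3.3} this block is $\sm{\Si_{n,1}&0\\0&\Si_{n,0}}$ restricted to the columns $v_2,v_3$, hence nonsingular, in particular of full row rank; but each index reduction step on the upper part leaves the lower two block rows $\sm{0&B_{n,4}&C_{n,4}\\0&0&C_{n,5}}$ and their input parts $\sm{0&\Si_{n,1}&0\\0&0&\Si_{n,0}}$ formally untouched (Lemma~\ref{lem5.2} only modifies the three top block rows), so this full-row-rank property is preserved verbatim through the loop. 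I would state this explicitly: since the feedthrough rows carrying $\Si_{n,1},\Si_{n,0}$ are never shifted and never combined into the $A$/$B$/$C$ dynamics during the reduction (this is the deliberate design choice, so as not to destroy causality), the diagonal nonsingular blocks survive, and after undoing $u(n)=V_nv(n)$ the block $\m{\hD^T_{n,4}\ \hD^T_{n,5}}^T$ equals $\sm{\Si_{n,1}&0\\0&\Si_{n,0}}$ times a fixed nonsingular matrix built from $V_n^{-1}$, hence still full row rank.

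Finally I would remark that Assumption~\ref{Ass2} is needed ``whenever needed'' precisely because each reduction step re-derives new block sizes $\tr_2,\tr_1,\tr_0,\dots$ whose constancy in $n$ is what allows Lemma~\ref{lem5.2} (via Lemma~\ref{lem1.2}) to be applied at the next iteration; as in Theorem~\ref{thm2}, this is the hypothesis that must hold at every $n$ and every intermediate stage for $\hat\mu$ to be well defined. Beyond that bookkeeping, the proof is a straightforward induction, so I would simply write: ``The proof follows by repeatedly applying Lemma~\ref{lem5.2} to the upper part of \eqref{eq3.3}, exactly as in the proof of Theorem~\ref{thm2}; termination is guaranteed by the strict decrease of the upper rank, the full-row-rank property of $\m{\hA^T_{n,1}\ \hB^T_{n+1,2}\ \hC^T_{n+2,3}}^T$ follows from Lemma~\ref{lem1.3}, and that of $\m{\hD^T_{n,4}\ \hD^T_{n,5}}^T$ follows since the feedthrough rows are never altered in the reduction.''
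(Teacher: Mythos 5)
Your proposal is correct and follows essentially the same route as the paper's proof: bring \eqref{eq1.1} to the condensed form \eqref{eq3.3} via Theorem \ref{pro5.1}, iterate the index reduction step of Lemma \ref{lem5.2} until the upper rank stops decreasing, and then revert the input substitution $v(n)=V_n^{-1}u(n)$ to obtain \eqref{sfree-descriptor}. Your added justification that $\m{\hD^T_{n,4}\ \hD^T_{n,5}}^T$ keeps full row rank (the $\Si$-rows are untouched by the reduction and right-multiplication by the nonsingular $V_n^{-1}$ preserves row rank) is a detail the paper leaves implicit, and it is correct.
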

\begin{proof}
	By repeating index reduction steps until the upper rank $r^u$ stop decreasing, we obtain the system 
	\begin{equation*}
	\pm{\hr_{2} \\ \hr_{1} \\ \hr_{0} \\ \\[-0.35cm] \hat{\vphi}_{1} \\ \hat{\vphi}_{0} \\  \hv} \quad
	\m{\hA_{n,1}& \hB_{n,1}    & \hC_{n,1}     \\
		0		& \hB_{n,2}    & \hC_{n,2}     \\
		0		&    0          & \hC_{n,3}		 \\ \hline \\[-0.35cm]
		0       & \hB_{n,5}    & \hC_{n,5}     \\
		0    	& 0          & \hC_{n,6}     \\ 
		0    	& 0          & 0}
	\m{x(n\!+\!2) \\ x(n\!+\!1) \\ x(n)} \!+\! 
	\m{\hD_{n,11}		& 0			& 0		  \\
		0 			& 0		    & 0		  \\
		0 			& 0		    & 0        \\	\hline \\[-0.35cm]
		0      	& \hat{\Si}_{n,1} & 0			 \\ 
		0     		& 0		    & \hat{\Si}_{n,0}        \\ 
		0     		& 0         & 0       	
	} v(n) \!=\! \m{\hf_1(n) \\ \hf_2(n) \\ \hf_3(n) \\ \hline \\[-0.35cm] \hf_4(n) \\ \hf_5(n) \\ \hf_6(n) } 
	\end{equation*}
	for all $n\geq n_0$, where the matrix $\m{\hA^T_{n,1} \ \hB^T_{n+1,2} \ \hC^T_{n+2,3}}^T$ has full row rank for all $n\geq n_0$. The new input sequence $\{v(n)\}_{n\geq n_0}$ satisfies $u(n)=V_nv(n)$, where $V_n$ is nonsingular for all $n\geq n_0$. Transform back $v(n) = V^{-1}_n u(n)$ and set
	\[ 
	\m{\hD_{n,1} \\ 0 \\ 0 \\ \hline \\[-0.35cm] \hD_{n,4} \\ \hD_{n,5} \\ 0} :=
	\m{\hD_{n,11}		& 0			& 0		  \\
		0 			& 0		    & 0		  \\
		0 			& 0		    & 0        \\	\hline \\[-0.35cm]
		0      	& \hat{\Si}_{n,1} & 0			 \\ 
		0     		& 0		    & \hat{\Si}_{n,0}        \\ 
		0     		& 0         & 0       	
	} V_n^{-1}, 
	\]
	we obtain exactly the strangeness-free descriptor system \eqref{sfree-descriptor}.
\end{proof}

As a direct corollary of Theorem \ref{thm4}, we obtain the existence and uniqueness of a solution to the closed-loop system via feedback as follows.

\begin{corollary}\label{coro5.1} Under the conditions of Theorem \ref{thm4}, the following statements hold true. \\
	i) There exists a first order feedback of the form \eqref{eq3.5} such that the closed-loop system is solvable if and only if either $\hv=0$ or $\hf_6(n)=0$ for all $n\geq n_0$. \\
	ii) Furthermore, the solution to the corresponding IVP (of the closed-loop system) is unique if and only if in addition, $d=m-\hv$.
\end{corollary}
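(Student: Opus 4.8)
The plan is to close the feedback loop inside the strangeness-free form \eqref{sfree-descriptor}, observe that for a suitably chosen feedback the result is a \emph{strangeness-free SiDE} whose redundant block is exactly the last block row of \eqref{sfree-descriptor}, and then read off both statements from Corollary~\ref{Cor1}. First I would dispose of the cheap ``only if'' direction: the last block equation $0=\hf_6(n)$ of \eqref{sfree-descriptor} contains no input, hence it is left completely untouched by any feedback of the form \eqref{eq3.5}. Consequently, if $\hv>0$ and $\hf_6(n)\neq 0$ for some $n\geq n_0$, the closed-loop system is unsolvable no matter how $F_{n,1},F_{n,0}$ are chosen; this already yields the ``only if'' part of i) and, a fortiori, of ii).

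For the ``if'' direction I would construct the feedback explicitly. Substituting a feedback $v(n)=F_{n,1}x(n+1)+F_{n,0}x(n)$ of the form \eqref{eq3.5} into \eqref{sfree-descriptor} (and reabsorbing the transformation $u(n)=V_nv(n)$) turns it into a SiDE in $x$ alone, in which the second-order term survives only in the first block, the $\Si_{n,1}$-block becomes first-order in $x$, the $\Si_{n,0}$-block becomes zeroth-order in $x$, and the last block is still $0=\hf_6(n)$. By Theorem~\ref{thm4} both $\m{\hA^T_{n,1}\ \hB^T_{n+1,2}\ \hC^T_{n+2,3}}^T$ and $\m{\hD^T_{n,4}\ \hD^T_{n,5}}^T$ have full row rank; using the latter I can prescribe the products $\hD_{n,4}F_{n,j}$ and $\hD_{n,5}F_{n,j}$ ($j=0,1$) freely, and then apply Lemma~\ref{lem1.6} with $P=\m{\hA^T_{n,1}\ \hB^T_{n+1,2}\ \hC^T_{n+2,3}}^T$ and the appropriate $Q,G$ (built from the $\hD$-, $\hB$- and $\hC$-blocks of \eqref{sfree-descriptor} evaluated at the shifted times) to choose $F_{n,1},F_{n,0}$ so that the leading coefficient matrix of the closed-loop SiDE in its block-triangular form --- namely $\hA_{n,1}$ together with the first-order rows coming from the $\hB_{n,2}$- and $\hB_{n,5}$-blocks and the zeroth-order rows coming from the $\hC_{n,3}$- and $\hC_{n,6}$-blocks, all evaluated at the corresponding shifts --- has full row rank. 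This is precisely the requirement in Definition~\ref{Def strangeness-free}, so the closed-loop SiDE is strangeness-free; moreover, because Lemma~\ref{lem1.6} introduces no hidden redundancy, its redundant part consists of exactly the $\hv$ equations $0=\hf_6(n)$.

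With the closed-loop SiDE in strangeness-free form and redundant dimension $\hv$, I would finish by invoking Corollary~\ref{Cor1}: its IVP is solvable if and only if the redundant right-hand side vanishes, i.e.\ $\hv=0$ or $\hf_6(n)=0$ for all $n\geq n_0$, which is statement i); and it is uniquely solvable if and only if, in addition, the number $m-\hv$ of non-redundant equations equals the number $d$ of unknowns --- the full row rank of the leading matrix forcing $m-\hv\leq d$, so the sufficient condition in Corollary~\ref{Cor1} is sharp and ``$d=m-\hv$'' is a genuine equivalence --- which is statement ii).

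The main obstacle is the middle step: the block bookkeeping needed to certify that one single pair $(F_{n,1},F_{n,0})$ simultaneously (a) prevents the $\Si_{n,0}$-block from spuriously acquiring a first-order term, (b) makes all the leading rows of the closed loop jointly of full row rank, and (c) creates no new hidden redundancy. Carrying (b) and (c) out within the available rank budget also requires one to argue $\hr_2+\hr_1+\hr_0+\hat{\vphi}_1+\hat{\vphi}_0\leq d$ (equivalently, that the dimension hypothesis of Lemma~\ref{lem1.6} is met), and this is where the genuine content beyond Theorem~\ref{thm4} sits. Everything else --- the ``only if'' direction and the concluding appeal to Corollary~\ref{Cor1} --- is immediate.
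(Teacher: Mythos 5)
Your proposal is correct and follows exactly the route the paper intends for this ``direct corollary'': close the loop with the feedback constructed via Lemma~\ref{lem1.6} (which is precisely Theorem~\ref{pro5.1}~ii) applied to the strangeness-free form \eqref{sfree-descriptor}), note that the input-free block $0=\hf_6(n)$ is unaffected by any feedback, and then read off solvability and uniqueness from Corollary~\ref{Cor1}. The only comment is that you re-derive the feedback construction by hand where you could simply invoke Theorem~\ref{pro5.1}~ii), whose hypothesis is supplied by the full row rank of $\m{\hA^T_{n,1}\ \hB^T_{n+1,2}\ \hC^T_{n+2,3}}^T$ guaranteed in Theorem~\ref{thm4}.
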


\begin{rmk}
\rm{
It should be noted that, analogously to SiDEs, each index reduction step of the descriptor system \eqref{eq1.1} also makes use of Lemma \ref{lem2.10}, where the matrices $Z^{(i)}_n$, $i=3,4,5$, may not be orthogonal. Furthermore, in 
Theorem \ref{pro5.1}, two matrices $U_n$, $V_n$ are only nonsingular but not orthogonal. 
Therefore, in general, the strangeness-free formulation \eqref{sfree-descriptor} could not be stably computed. For the numerical treatment of (continuous-time) second order DAEs, in \cite{Wun08} a different approach was developed. We will modify it for SiDEs/descriptor systems in the next section.
}
\end{rmk}

\begin{rmk}
\rm{
Another interesting method in the study of descriptor systems is the \emph{behavior approach}, where we do not distinguish the state $x$ and an input $u$ but combine them in one \emph{behavior vector}. Then \eqref{eq1.1} will become a SiDE of this behavior variable, and hence, we can apply the results in section \ref{Sec2} for this system. 
Nevertheless, due to the reinterpretation of variables, this approach may alter the strangeness-free form \eqref{sfree-descriptor}. To keep the brevity of this research, we will not present the details here. For the interested readers, we refer to \cite{KunMR01,Rat97,Rat97a} for the case of first order DAEs, and to \cite{Wun08} for the case of second order DAEs.
}
\end{rmk}
\section{Difference arrays associated with second order SiDEs/descriptor systems}\label{Sec4}

In two previous sections, to analyze the solvability of the SiDE  \eqref{eq1.2} or of the descriptor system \eqref{eq1.1}, first one needs to bring it into the strangeness-free form. 
Nevertheless, sometime this task is not feasible, for example when Assumptions \ref{Ass1} or \ref{Ass2} is violated at some index reduction steps. These difficulties have also been observed for continuous-time systems of first or higher orders in \cite{KunMR01,Wun08}.
A breakthrough, thanks to Campbell \cite{Cam87} while considering DAEs, is to differentiate a given system a number of times and put everyone of them, including the original one, into a so-called \emph{inflated system}. Then the strangeness-free formulation will be determined by appropriate selection of equations inside this inflated system. In this section we will examine this approach to the descriptor system \eqref{eq1.1}. The analysis for SiDEs of the form \eqref{eq1.2} can be obtained by simply setting $D_n$ to be $0_{m,p}$ for all $n$.
We further assume the following condition.

\begin{assumption}\label{Ass3}
\rm{
	Consider the descriptor system \eqref{eq1.1}. Assume that there exists a first order feedback of the form \eqref{eq3.5} such that the corresponding IVP of the closed-loop system is uniquely solvable.
}	
\end{assumption}

Notice that, in case of the SiDE \eqref{eq1.2}, Assumption \ref{Ass3} means that the IVP \eqref{eq1.2}-\eqref{eq1.3} is uniquely solvable. \ 
Now let us introduce the \emph{difference-inflated system of level $\ell \in \N$} as follows.
\bens
A_{n} x(n\!+ \!2) \!+ \! B_{n} x(n\!+ \!1) \!+ \! C_{n} x(n) \!+ \! D_n u(n) &\!=\!& f(n), \notag \\
A_{n\!+ \!1} x(n\!+ \!3) \!+ \! B_{n\!+ \!1} x(n\!+ \!2) \!+ \! C_{n\!+ \!1} x(n\!+ \!1) \!+ \! D_{n\!+ \!1} u(n\!+ \!1) &\!=\!& f(n\!+ \!1), \notag	\\
&\dots& \\
A_{n\!+\!\ell} x(n\!+\!\ell\!+\!2) \!+\! B_{n\!+\!\ell} x(n\!+\!\ell\!+\!1) \!+\! C_{n\!+\!\ell} x(n\!+\!\ell) \!+\! D_{n\!+\!\ell} u(n\!+\!\ell) &\!=\!& f(n\!+\!\ell) \ . \notag
\eens
We rewrite this system as
\[
\underbrace{\m{
		C_n	& B_n     & A_n     &         &         &  &  &  \\ 
		& C_{n\!+\!1} & B_{n\!+\!1} & A_{n\!+\!1} &         &  &  &  \\ 
		&         & \ddots  & \ddots  & \ddots  &  &  &  \\ 
		&         &         & \ddots  & \ddots  & \ddots &  &  \\ 
		&         &         &         & C_{n\!+\!\ell} & B_{n\!+\!\ell} & A_{n\!+\!\ell}
}}_{=:\cM}
\! 
\underbrace{\m{x(n) \\ x(n\!+\!1)  \\ x(n\!+\!2) \\ \vdots \\ x(n\!+\!\ell)}}_{=:\cX} 
+
\]
\be\label{inflated}
+ \underbrace{\m{
		D_n &         &         & \\
		& D_{n+1} &  		& \\
		&         & \ddots  & \\
		&         &         & D_{n+\ell}
}}_{=:\cN}
\underbrace{\m{u(n) \\ u(n\!+\!1)  \\ \vdots \\ u(n\!+\!\ell)}}_{=:\cU} 
= \! \underbrace{\m{f(n) \\ f(n\!+\!1)  \\ \vdots \\ f(n\!+\!\ell)}}_{=:\cG}\ \mbox{ for all } n\geq n_0.
\ee
\begin{definition}\label{shift index}
\rm{
Suppose that the descriptor system \eqref{eq1.1} satisfies Assumption \ref{Ass3}. Let $\ell$ be the minimum number such that a strangeness-free descriptor system of the form \eqref{sfree-descriptor} can be extracted from \eqref{inflated} 
by using elementary matrix-row operations. Then the so-called \emph{shift-index} of \eqref{eq1.1}, denoted by $\nu$, is set by $\ell/2$ if $\ell$ is even and by $(\ell+1)/2$ otherwise.
}
\end{definition}
%

We give the relation between this shift-index $\nu$ and the strangeness-index $\mu$ in the following proposition.

\begin{proposition}\label{pro3.1}
	Suppose that the descriptor system \eqref{eq1.1} satisfies Assumption \ref{Ass3}. If the strangeness-index $\mu$ is well-defined, then so is the shift-index $\nu$. Furthermore, we have that $\nu \leq \mu$. 
\end{proposition}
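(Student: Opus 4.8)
The plan is to show that the strangeness-free form produced by Algorithm~\ref{Alg1} (combined with the feedback construction of Theorem~\ref{pro5.1}) can actually be read off the difference-inflated system~\eqref{inflated} using only row operations, and that the number of levels needed is controlled by $\mu$. First I would track, through one index reduction step (Lemma~\ref{lem2.10}), which time-shifted copies of the original equation are consumed. Passing from \eqref{eq2.5} to \eqref{eq2.11} uses the equation at time $n$ together with its shifts at $n+1$ and $n+2$ (these are exactly the shifts appearing in the right-hand side of \eqref{eq2.11}, via the terms $f_2(n+1)$, $f_3(n+1)$, $f_3(n+2)$). Hence one index reduction step is realized inside the inflated system of level $2$. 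The key bookkeeping claim is then: after $k$ index reduction steps, every block row of the current system is an explicit \emph{linear combination}, with coefficients built from the orthogonal/nonsingular transformation matrices, of rows of the inflated system~\eqref{inflated} at level $2k$; moreover this combination uses only \emph{matrix-row operations} (left multiplication by a sequence-dependent matrix), which is all that Definition~\ref{shift index} permits. I would prove this by induction on $k$, the base case being the block triangularization of $M_n$ in Lemma~\ref{lem2.1}, which is a pure row operation at level $0$.

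For the inductive step, I would use that the transformations in Lemma~\ref{lem2.10} are: (i) scaling by the orthogonal matrices $S^{(i)}_n, Z^{(i)}_n$ acting on rows of the \emph{same} time index, and (ii) forming combinations such as $Z^{(2)}_n B_{n,1} + Z^{(4)}_n C_{n+1,2}$, which mix row blocks at times $n$, $n+1$, $n+2$. By the induction hypothesis those row blocks are themselves linear combinations of inflated-system rows up to level $2(k-1)$; shifting by at most $2$ and recombining lands everything within level $2(k-1)+2 = 2k$. Therefore after $\mu$ steps the strangeness-free SiDE~\eqref{sfree-SiDE} (or, with the feedback of Theorem~\ref{pro5.1}, the strangeness-free descriptor form~\eqref{sfree-descriptor}) is extractable from the inflated system at level $\ell = 2\mu$. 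By Definition~\ref{shift index}, $\nu$ is the \emph{minimum} such $\ell$ divided by $2$ (rounding up an odd $\ell$), so $\nu \le (2\mu)/2 = \mu$. The well-definedness of $\nu$ follows at once: we have exhibited one admissible $\ell$, so the minimum exists; and Assumption~\ref{Ass3} guarantees a unique solution, so the extracted strangeness-free form genuinely captures the solution set, as required by the definition.

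One subtlety to handle carefully is that Algorithm~\ref{Alg1} also \emph{discards} rows (the redundant block of size $v$) and, across iterations, the column-compression of Lemma~\ref{lem2.1} is re-applied to an already-transformed behavior matrix; I need to check that this re-triangularization is still a row operation on the current system and hence, composing with the induction hypothesis, still a row operation on the inflated system. Since every transformation used — orthogonal compressions $T_\perp(\cdot), T_0(\cdot)$, the matrices from Lemma~\ref{lem1.2}, and the nonsingular $U_n$ of Lemma~\ref{lem1.5} — is a left multiplication, this composes cleanly; the descriptor case additionally uses the right multiplication $u(n)=V_n v(n)$ and the feedback~\eqref{eq3.5}, which only reshuffle the input columns and do not affect the level count.

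The main obstacle I expect is the level accounting: showing that no step costs \emph{more} than $2$ additional shifts. The danger is that within a single index reduction step one composes several of the operations above, and a naive bound would add their shift costs. The resolution is that only the shift operator $\Delta$ of \eqref{2.5c-shift}, applied to the order-$0$ and order-$1$ blocks before combining them with the order-$1$ and order-$2$ blocks, introduces new time indices, and it is applied at most twice (once to reach $n+1$ for the order-$1$ reduction, once more to reach $n+2$ for the order-$2$ reduction) — exactly as witnessed by Example~\ref{exp2}, where despite $\mu$ jumping from $1$ to $2$ the shift count of $f$ stays at $2$. Making this "at most two new shifts per step, reused rather than re-incurred" precise — i.e.\ that the shifts accumulate additively in $\mu$ but with increment $2$, not multiplicatively — is the crux of the argument, and I would isolate it as the inductive invariant "the current system lives at level $\le 2k$ after $k$ steps."
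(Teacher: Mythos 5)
Your argument is correct and takes essentially the same route as the paper, whose proof is the single observation that every reformulation step of Algorithm~\ref{Alg1} is a consequence of the inflated system~\eqref{inflated} with $\ell = 2\mu$ or $2\mu-1$. Your inductive invariant (level $\le 2k$ after $k$ reduction steps, consistent with Theorem~\ref{thm2}'s statement that the right-hand side involves at most $f(n),\dots,f(n+2\mu)$) is exactly the bookkeeping the paper leaves implicit.
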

\begin{proof} The claim is straight forward, since every reformulation step performed in Algorithm \ref{Alg1} is a consequence of an inflated system \eqref{inflated} with $\ell = 2\mu$ or $2\mu - 1$. 
\end{proof}

\begin{rmk}
\rm{
As will be seen later in Example \ref{exa3}, for second order SiDEs, the shift-index can be strictly smaller than the strangeness index.
}
\end{rmk}

\begin{rmk}
\rm{
Restricted to the case of first order SiDEs (i.e., $A_n=0$ for all $n\geq n_0$), the strangeness-index $\mu$ defined in this paper is equal to the forward strangeness-index proposed by Br\"ull \cite{Bru09}. For second order system, our strangeness-index is analogous to the one for continuous-time systems proposed by Mehrmann and Shi \cite{MehS06}, and by Wunderlich \cite{Wun08}. We, however, emphasize that the canonical forms constructed in this research is simpler and more convenient from the theoretical viewpoint. Besides that, similar to the case of continuous-time systems, the strangeness index $\mu$ only gives an upper bound for the number of shift-forward operator that have been used, in order to achieve the strangeness-free form \eqref{sfree-SiDE}. For further details, see Remark 17, \cite{MehS06}. 
}
\end{rmk}

In the following theorem we will answer the question how to derive the strangeness-free formulation \eqref{sfree-descriptor} from \eqref{inflated}. 

\begin{thm}\label{thm6} Assume that the shift index $\nu$ of the descriptor system \eqref{eq1.1} is well-defined. Furthermore, suppose that \eqref{eq1.1} satisfies Assumption \ref{Ass3}. 
	Then any solution to the descriptor system \eqref{eq1.1} is also a solution to the following system
	\begin{equation}\label{sfree form}
	\pm{\hr_{2} \\ \hr_{1} \\ \hr_{0} \\ \hline \\[-0.35cm] \hat{\vphi}_{1} \\ \hat{\vphi}_{0} } \ 
	\m{\hA_{n,1}& \hB_{n,1}    	& \hC_{n,1}     \\
		0		& \hB_{n,2}    	& \hC_{n,2}     \\
		0		&    0          & \hC_{n,3}		 \\ \hline \\[-0.35cm]
		0    	& \hB_{n,5}    	& \hC_{n,5}     \\
		0    	& 0          	& \hC_{n,6}    } \!
	\m{x(n\!+\!2) \\ x(n\!+\!1) \\ x(n)} \!+\! 
	\m{\hD_{n,1} \\ 0 \\ 0  \\ \hline \\[-0.35cm] \hD_{n,4} \\ \hD_{n,5}} \! u(n) 
	\!=\! \m{\hG_{n,1} \\ \hG_{n,2} \\ \hG_{n,3} \\ \hline \\[-0.35cm] \hG_{n,4} \\ \hG_{n,5} }\ \mbox{ for all } n\geq n_0,
	\end{equation}
	where the matrices $\m{\hA^T_{n,1} & \hB^T_{n+1,2} & \hC^T_{n+2,3}}^T$, $\m{\hD^T_{n,4} & \hD^T_{n,5}}^T$ have full row rank for all $n\geq n_0$.
	Furthermore, we have that \ $\sum_{i=0}^{2}\hr_i + \sum_{i=0}^{1}\hat{\vphi}_i = d$, or equivalently, 
	\be\label{eq4.12}
	\rank \left(\m{\hA_{n,1} \\ \hB_{n+1,2} \\ \hC^T_{n+2,3}} \right) + \rank \left(\m{\hD_{n,4} \\ \hD_{n,5}} \right) = d \ .
	\ee
\end{thm}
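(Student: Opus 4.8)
The plan is to derive \eqref{sfree form} from the inflated system \eqref{inflated} by a careful selection of block rows, and then to prove the dimension identity \eqref{eq4.12} by a counting argument that couples solvability (Assumption \ref{Ass3}) with the full-row-rank properties of the selected blocks. First I would recall that under Assumption \ref{Ass3} the closed-loop system is uniquely solvable; by Theorem \ref{thm4} (or, when Assumptions \ref{Ass1}/\ref{Ass2} fail, by the very definition of the shift-index $\nu$) such a strangeness-free descriptor system of the form \eqref{sfree-descriptor} exists and, by Definition \ref{shift index}, it can be extracted from the inflated array $\cM \cX + \cN \cU = \cG$ with $\ell = 2\nu$ (or $\ell = 2\nu-1$) using only elementary matrix-row operations. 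Performing those row operations on $[\cM \mid \cN \mid \cG]$ and keeping only the block rows whose unknowns are $x(n), x(n+1), x(n+2)$ and $u(n)$ — collecting all other surviving terms into the right-hand side functions $\hG_{n,i}$, which then depend on $f(n), f(n+1), \dots, f(n+\ell)$ and possibly on the future inputs $u(n+1),\dots$ — yields exactly the structural form \eqref{sfree form}, with the block triangular pattern and the stated full-row-rank conditions on $\m{\hA^T_{n,1}\ \hB^T_{n+1,2}\ \hC^T_{n+2,3}}^T$ and $\m{\hD^T_{n,4}\ \hD^T_{n,5}}^T$ inherited from the construction (these are precisely the conditions that make \eqref{sfree-descriptor}, equivalently \eqref{sfree form}, strangeness-free).

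The substantive part is the dimension count \eqref{eq4.12}. Here I would argue as follows. Apply the first-order feedback $u(n) = F_{n,1}x(n+1) + F_{n,0}x(n)$ from \eqref{eq3.5} in Theorem \ref{pro5.1}(ii) to \eqref{sfree form}; by Theorem \ref{pro5.1} this closed-loop system is again strangeness-free and, by Corollary \ref{coro5.1}, is uniquely solvable for any consistent initial data — which is consistent with Assumption \ref{Ass3}. In the closed-loop strangeness-free form the inherent regular difference equation (as in Corollary \ref{Cor2}) has leading matrix $\m{\hA^T_{n,1}\ \hB^T_{n+1,2}\ \hC^T_{n+2,3}}^T$ for the dynamic part, while the algebraic constraints coming from the block rows containing $\m{\hD^T_{n,4}\ \hD^T_{n,5}}^T$ determine the remaining components of $x(n)$; for unique solvability the total number of equations determining $x(n+2)$ (dynamic plus algebraic, shifted appropriately) must equal $d$. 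Since $\m{\hA^T_{n,1}\ \hB^T_{n+1,2}\ \hC^T_{n+2,3}}^T$ has full row rank $\hr_2 + \hr_1 + \hr_0 = \sum_{i=0}^2 \hr_i$ and $\m{\hD^T_{n,4}\ \hD^T_{n,5}}^T$ has full row rank $\hat\vphi_1 + \hat\vphi_0 = \sum_{i=0}^1 \hat\vphi_i$, and since a unique solution forces these two independent sets of constraints to pin down $x(n)\in\r^d$ exactly (no free parameters, no overdetermination), we get $\sum_{i=0}^2 \hr_i + \sum_{i=0}^1 \hat\vphi_i = d$, which is \eqref{eq4.12}.

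For the equivalence-of-solution-sets direction (``any solution to \eqref{eq1.1} is also a solution to \eqref{sfree form}'') I would simply note that every row of \eqref{sfree form} is obtained as a linear combination of rows of \eqref{inflated}, each of which is just a time-shifted copy of \eqref{eq1.1}; hence any $x$ (together with the given $u$) satisfying \eqref{eq1.1} for all $n\geq n_0$ automatically satisfies \eqref{sfree form}. I would not claim the converse here, since \eqref{sfree form} is a selection and may a priori lose some equations — this is exactly why the statement is phrased as a one-sided inclusion.

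\textbf{Main obstacle.} The delicate point is the counting argument for \eqref{eq4.12}: I must argue that ``unique solvability of the IVP'' translates precisely into ``the two full-row-rank blocks together have $d$ rows,'' with neither slack nor excess. The clean way is to pass to the closed-loop system via Theorem \ref{pro5.1}(ii), where the augmented leading matrix $\m{\hA^T_{n,1}\ \hB^T_{n+1,2}\ (\hC^T_{n+2,3},\ \hD^T\text{-feedback rows})}^T$ becomes square and invertible exactly when uniqueness holds — invertibility then reads off as the row count being $d$, and splitting that count into the $x$-part and the (feedback-absorbed) $u$-part gives \eqref{eq4.12}. Getting the bookkeeping of row sizes through the feedback substitution right — in particular checking that the feedback in \eqref{eq3.5} does not alter $\m{\hD^T_{n,4}\ \hD^T_{n,5}}^T$ having full row rank $\hat\vphi_1+\hat\vphi_0$ and that it makes the combined dynamic matrix square — is the one place where care is genuinely required; everything else is routine block manipulation.
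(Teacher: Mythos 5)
Your proposal is correct and follows essentially the same route as the paper: restrict the inflated system \eqref{inflated} to the equations involving only the current window of unknowns, bring the result into the condensed form of Lemma \ref{lem1.5}, remove the remaining hidden redundancies in the pairs $(\chB_2,\chC_3)$ and $\bigl(\chA_1,\sm{\chB_2\\ \chC_3}\bigr)$, and read off \eqref{eq4.12} from unique solvability. Two points of comparison. First, where you invoke Definition \ref{shift index} to assert that the extraction exists, the paper carries it out constructively: it forms $\cW=\m{\cM(:,3n+1:\mathrm{end}) & \cN(:,n+1:\mathrm{end})}$, takes $U_1$ a basis of $\kernel(\cW^T)$, and keeps $U_1^T$ times the inflated system; since $U_{1,\perp}^T\cW$ has full row rank, the discarded block imposes no constraint relevant to the extraction. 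Note that $\cW$ deliberately includes the columns of the \emph{future inputs}, so the selected rows --- and hence the right-hand sides $\hG_{n,i}$ --- involve only $f(n),\dots,f(n+\ell)$ and never $u(n+1),\dots$; your parenthetical allowing future inputs into $\hG_{n,i}$ would reintroduce the non-causality the construction is designed to avoid and should be dropped (leftover future-state terms certainly cannot be moved to the right-hand side, since they are unknowns). Second, your counting argument for \eqref{eq4.12} via the feedback of Theorem \ref{pro5.1}(ii) and Corollary \ref{coro5.1} is a legitimate, more explicit justification of what the paper dispatches with the single phrase ``due to Assumption \ref{Ass3}''. The paper additionally verifies that the extracted system is invariant under left equivalence of \eqref{eq1.1}; you omit this, but it is not required by the statement as written.
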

\begin{proof}
Assume that $\nu$ is already known, we now construct an algorithm to select the strangeness-free descriptor system \eqref{sfree-descriptor} from the inflated system \eqref{inflated}. 
For notational convenience, we will follow the MATLAB language, \cite{matlab}. Consider the following spaces and matrices
\be\label{eq4.2}
\begin{array}{ll}
	\cW &:= \m{ \cM(:,3n+1 : end) & \quad \cN(:,n+1 : end) }, \\
	U_1 & \mbox{ basis of } \kernel(\cW^T), \ \mbox{and} \ U_{1,\perp} \mbox{ basis of } \range(\cW).	
\end{array}
\ee
Due to Lemma \ref{lem1.4}, we have that $U_1^T \cW = 0$ and $U_{1,\perp}^T \cW$ has full row rank. Furthermore, the matrix $\m{U_1 \ U_{1,\perp}}^T$ is nonsingular, and hence, system \eqref{inflated} is equivalent to the coupled system below. 
\begin{alignat}{3}
\label{eq4.3a} & U_1^T \cM(:,1 : 3n) \m{x(n) \\ x(n+1) \\ x(n+2)} + U_1^T \cN(:,1:n) u(n) = U_1^T \cG, \\
\label{eq4.3b} & U^T_{1,\perp}  \cW \m{x(n\!+\!3) \\ \vdots \\ x(n\!+\!\nu) \\ \hline  u(n\!+\!1) \\ \vdots \\ u(n\!+\!\nu)} 
+ U^T_{1,\perp} \m{ \cM(:,1\!:\!3n) & \cN(:,1\!:\!n)} \! \m{x(n) \\ x(n\!+\!1) \\ x(n\!+\!2) \\ \hline u(n)} \!=\! U_{1,\perp}^T \! \cG.
\end{alignat}
Due to the full row rank property of $U_{1,\perp}^T \cW$, we see that \eqref{eq4.3b} plays no role in the determination of the strangeness-free descriptor system \eqref{sfree-descriptor}. Thus, \eqref{sfree-descriptor} is a consequence of \eqref{eq4.3a}. 
For notational convenience, let us rewrite system \eqref{eq4.3a} as	
\[
\m{\chA & \ \chB & \ \chC & \vline \ \chD} \m{x(n\!+\!2) \\ x(n\!+\!1) \\ x(n) \\ \hline u(n)} =  \chG. 
\]
Scaling this system with the matrix $\chU$ obtained in Lemma \ref{lem1.5}, we have	
\begin{align}\label{eq4.7}
\m{	\chA_1 	 			 & \ \chB_1  		& \ \chC_1  	& \vline & \chD_1 \\
	0 					 & \ \chB_2 	 	& \ \chC_2 	 	& \vline & 0 \\
	0 					 & 0 	 			& \  \chC_3 	& \vline & 0 \\ 
	0 					 & 0 	 			& 0				& \vline & 0 \\ \hline \\[-0.35cm]
	0 					 & \ \chB_4  		& \ \chC_4 		& \vline & \chD_4 \\
	0 					 & 0  				& \ \chC_5 		& \vline & \chD_5		} 
\! \m{x(n\!+\!2) \\ x(n\!+\!1) \\ x(n) \\ \hline u(n)} 
=  \m{\chG_1 \\ \chG_2 \\ \chG_3 \\ 0 \\ \hline \\[-0.35cm]  \chG_4 \\ \chG_5 } \ ,
\end{align}
where the matrices $\chA_1$, $\chB_2$,  $\chB_4$, $\chC_3$, and $\sm{ \chD_4^T \  \chD_5^T}^T$ have full row rank. 
Notice that the presence of the $0$ block on the right hand side vector is due to the existence of a solution (Assumption \ref{Ass3}).
Applying Lemma \ref{lem1.4} consecutively for two following matrix pairs $\left(\chB_2, \chC_3 \right)$, $\left(\chA_1, \sm{\chB_2^T \ \chC_3^T}^T \right)$, we obtain two orthogonal matrices $\sm{S^{(i)}_{n} \\ Z^{(i)}_{n}} \in \r^{r_i,r_i}$, $i=1,2$ such that both pairs $\left(S^{(1)}_{n} \chB_2, \chC_3 \right)$, $\left(S^{(2)}_{n}\chA_1, \sm{\chB_2^T \ \chC_3^T }^T \right)$ have no hidden redundancy. Scaling the first and second block row equations of \eqref{eq4.7} with $S^{(2)}_{n}$ and $S^{(1)}_{n}$ respectively, we obtain 
	\begin{equation*}
	\m{	S^{(2)}_{n} \chA_1 	 & \ S^{(2)}_{n}\chB_1  		& \ S^{(2)}_{n} \chC_1  	& \vline & \ S^{(2)}_{n} \chD_1 \\
		0 					 & \ S^{(1)}_{n} \chB_2 	 	& \ S^{(1)}_{n} \chC_2 		& \vline & 0 		
	} 
	\! \m{x(n\!+\!2) \\ x(n\!+\!1) \\ x(n) \\ \hline u(n)} = \m{S^{(2)}_{n}  \chG_1 \\ S^{(1)}_{n} \chG_2} \ .
	\end{equation*}
	Combining these equations with the third, fifth and sixth block equations of \eqref{eq4.7}, we obtain the system
	\begin{equation}\label{eq4.9}
	\m{	S^{(2)}_{n} \chA_1 	 & \ S^{(2)}_{n}\chB_1  		& \ S^{(2)}_{n} \chC_1  	& \vline & \ S^{(2)}_{n} \chD_1 \\
		0 					 & \ S^{(1)}_{n} \chB_2 	 	& \ S^{(1)}_{n} \chC_2 		& \vline & 0 					\\
		0 					 & 0 	 						& \ \chC_3					& \vline & 0 \\ \hline \\[-0.35cm]
		0 					 & \ \chB_4  					& \ \chC_4 					& \vline & \chD_4 \\
		0 					 & 0  							& \ \chC_5 					& \vline & \chD_5		} 
	\! \m{x(n\!+\!2) \\ x(n\!+\!1) \\ x(n) \\ \hline u(n)} 
	=  \m{S^{(2)}_{n} \chG_1 \\ S^{(1)}_{n} \chG_2 \\ \chG_3 \\ \hline \\[-0.35cm]  \chG_4 \\ \chG_5 } \ .
	\end{equation}
	which is exactly our desired system \eqref{sfree form}.	Moreover, due to Lemma \ref{lem1.3}, the matrix $\m{(S^{(2)}_{n} \chA_1)^T  & (S^{(1)}_{n} \chB_2)^T & \chC^T_3}^T$ 
	has full row rank, and the identity \eqref{eq4.12} holds true due to Assumption \ref{Ass3}. \\

Finally, we will prove that system \eqref{sfree form} is not affected by left equivalence transformation. Let us assume that  \eqref{eq1.1} is left equivalent to the SiDE 
		\begin{equation}\label{eq4.5}
		\tA_{n} x(n+2) + \tB_{n} x(n+1) + \tC_{n} x(n)  + \tD_{n} u(n) = \tf(n) \ \mbox{ for all } n\geq n_0.
		\end{equation}
		Thus, there exists a pointwise nonsingular matrix sequence $\{P_n\}_{n\geq n_0}$ such that 
		\[ \m{\tA_{n} \ \tB_{n} \ \tC_{n} \ \tD_{n}} = P_n \m{A_{n} \ B_{n} \ C_{n} \ D_{n} } \mbox{ and } \tf(n) = P_n f(n)\ \mbox{ for all } n\geq n_0. \]
		Therefore, the difference-inflated system of level $\ell$ for system \eqref{eq4.5} takes the form
		\begin{equation}\label{eq4.6}
		\tcM \cX + \tilde{\cN} \cU = \tilde{\cG}, 
		\end{equation}
		where the matrix coefficients are
		\[
		\tcM \!=\! \diag(P_n,...,P_{n+\ell}) \ \cM, \ \tilde{\cN} \!=\! \diag(P_n,...,P_{n+\ell}) \ \cN, \ \tilde{\cG} \!=\! \diag(P_n,...,P_{n+\ell}) \cG.
		\]
		This follows that two systems \eqref{inflated} and \eqref{eq4.6} are left equivalent, which finishes the proof.
\end{proof}
We summarize our result in the following algorithm.
\begin{algorithm}[H]
	\caption{Strangeness-free formulation for SiDEs using difference arrays}
	\label{Alg2}
	\textbf{Input:} The SiDE  \eqref{eq1.1}.\\
	\textbf{Output:} The strangeness-free descriptor system \eqref{sfree form} and the minimal number of shifts $\ell$.
	\begin{algorithmic}[1]
		\State Set $\ell := 0$.
		\State Construct the difference-inflated system of level $\ell$, and rewrite it in the form \eqref{inflated}.
		\State Find $U_1$ as in \eqref{eq4.2} and construct system \eqref{eq4.3a}.
		\State Find $\chU$ as in Lemma \ref{lem1.5} and construct system \eqref{eq4.7}.
		\State Find the matrices $S^{(1)}_{n}$, $S^{(2)}_{n}$ in the process used to remove the hidden redundancies in two matrix pairs $\left(\chB_2, \ \chC_3 \right)$, $\left(\chA_1, \m{\chB^T_2 \ \chC^T_3 }^T \right)$, respectively.
		\State Construct the system \eqref{eq4.9}. 
		\IF{$\rank \m{\hA^T_{n,1} \ \hB^T_{n+1,2} \ \hC^T_{n+2,3}}^T + \rank \m{\hD^T_{n,4} \ \hD^T_{n,5}}^T = d $} STOP. 
		\ELSE{ set $\ell := \ell + 1$ and go to 2} 
		\ENDIF
	\end{algorithmic}
\end{algorithm}

In order to illustrate Algorithm \ref{Alg2}, we consider the following two examples. 

\begin{example}\label{exa3}
\rm{
Let us revisit system \eqref{eq2.13} for the case $\a=0$. In this system, $D_n=0$ for all $n\geq 0$. 
For $\ell = 2$, the inflated system \eqref{inflated} reads
	\be\label{eq4.10}
	\left[
		\begin{array}{rrr|rrr}
			C_n	& B_n     		& A_n     	     & 0        	   	& 0        		 		  		\\ 
			0	& C_{n\!+\!1}	& B_{n\!+\!1}    & A_{n\!+\!1}   	& 0        		   			\\ 
			0	& 0        	    & C_{n\!+\!2}	 & \undermat{=:\cW}{ B_{n\!+\!2} & \quad A_{n\!+\!2}} 			 	
		\end{array}
	\right]
	\! 
	\m{x(n) \\ x(n\!+\!1)  \\ x(n\!+\!2) \\ \hline x(n\!+\!3) \\ x(n\!+\!4)}
	= \m{f(n) \\ f(n\!+\!1)  \\ f(n\!+\!2)}
	\ee
	Let	$U_1$ be the basis of $\kernel(\cW^T)$. We then determine system \eqref{eq4.3a} by scaling \eqref{eq4.10} with $U_1^T$. The resulting system reads
	\be\label{eq4.11}
	U_1^T  \m{	C_n	& B_n     		& A_n     	      		\\ 
		0	& C_{n\!+\!1}	& B_{n\!+\!1}   				\\ 
		0	& 0        	    & C_{n\!+\!2}	
	}
	\! 
	\m{x(n) \\ x(n\!+\!1)  \\ x(n\!+\!2)}
	= U_1^T \m{f(n) \\ f(n\!+\!1)  \\ f(n\!+\!2)} \ .
	\ee 
	Finally, by performing Steps 6 to 10 we can extract the strangeness-free form \eqref{eq2.15} from \eqref{eq4.11}. Thus, we conclude that the shift index is $\nu=1$, which is the same as the shift index in the case $\alpha\neq 0$. We recall Example \ref{exp2}, in which it is shown that the strangeness indices in the two cases are different.
}
\end{example}

\begin{example}\label{exa4}
\rm{
	A singular system of second order differential equations, which describes a three link robot arm \cite{Hou94a}, is given by
	\begin{equation*}
	\m{M_0 & 0 \\ 0 & 0} \ddot{x}(t) + \m{G_0 & 0 \\ 0 & 0} \dot{x}(t) + \m{K_0 & H_0^T \\ H_0 & 0} x(t) = \m{B_0 \\ 0} u(t),
	\end{equation*}
	where $M_0$ represents the nonsingular mass matrix, $G_0$ the coefficient matrix associated
	with damping, centrifugal, gravity, and Coriolis forces, $K_0$ the stiffness matrix, and $H_0$ the constraint. A simple discretized version of this system with the stepsize $h$ takes the form
	\begin{align*}
	& \m{M_0 & 0 \\ 0 & 0} \dfrac{x(n+2)-2x(n+1)+x(n)}{h^2}  + \m{G_0 & 0 \\ 0 & 0} \dfrac{x(n+2)-x(n)}{2h} \\ & + \m{K_0 & H_0^T \\ H_0 & 0} x(n+1) = \m{B_0 \\ 0} u(n+1).
	\end{align*}
	As a simple example, let us take $M_0 = G_0 = K_0 = H_0= B_0 = 1$, $h=0.01$. Then Algorithm \ref{Alg2} terminates after two steps and hence, the shift index is $\nu=1$ for all $n\geq n_0$. 
	Furthermore, we notice that no matter central, forward or backward difference is chosen for discretizing the derivative $\dot{x}(t)$, the shift index remains unchanged $\nu=1$. Of course, the resulting strangeness-free descriptor systems are different.
}
\end{example}

\section{Conclusion}
By using the algebraic approach, we have analyzed the solvability of second order SiDEs/descriptor systems, 
based on the derived condensed forms constructed under certain constant rank assumptions. In comparison to the previously known procedures \cite{MehS06,Wun06}, we have reduced the number of constant rank conditions in every index reduction step from seven to five. This would enlarge the domain of application for SiDEs (and also for DAEs). However, requiring constant rank assumptions in the discrete-time case seems less nature than in the continuous-time case. To overcome this limitation, we also consider the difference-array method, which is numerically stable, to obtain the strangness-free form. The index theory together with the two algorithms presented in this paper can be extended without difficulty to arbitrarily high order SiDEs/descriptor systems.
We also notice that the backward time case ($n\leq n_0$) can be directly extended from the forward time case, as it has been done in \cite{Bru09}. The analysis of the two-way case, which happens while considering boundary value problems for SiDEs, 
is under our on-going research. Furthermore, the condensed forms presented in this work also motivate further study on the staircase form for second order systems, which would be an interesting extension of the classical result for first order systems, e.g. \cite{Son13}.

\vspace{0.5cm}
\noindent
{\bf Acknowledgment} The authors would like to thank the anonymous referee for very helpful comments and suggestions that led to improvements of this paper. 


\bibliographystyle{abbrv}


\appendix

\section{Proof of Lemma \ref{lem2.10}}\label{App0}
\begin{pf}
In order to prove this lemma, we will make use of the shifted equation \eqref{2.5c-shift} if the matrix pair $(B_{n,2},C_{n+1,3})$ has hidden redundancy. 
Analogously, if the pair $\left( A_{n,1}, \m{B_{n+1,2}^T \ C_{n+2,3}^T}^T \right)$ has hidden redundancy then we will make use of the shifted equation
\begin{equation}\label{2.5b-shift}
B_{n+1,2} x(n+2)  + C_{n+1,2} x(n+1) = f_2(n+1),
\end{equation}
and may be also the double shifted equation
\be\label{2.5c-doubleshift} 
C_{n+2,3} x(n+2) = f_3(n+2).
\ee
Now we observe that \eqref{eq1.2} has the same solution set as that of the following extended system
\begin{equation}\label{eq2.9}
\pm{ r_{2} \\ r_1 \\ r_0 \\ v \\ \hline  r_0 \\ r_1 \\ r_0 } \
\m{A_{n,1}		& B_{n,1}      			   & C_{n,1} 	\\
	0 			& B_{n,2}  	   			   & C_{n,2}  	\\
	0			& 0			 			   & C_{n,3}     \\
	0 		    & 0            			   & 0 \\ \hline 
	0			& C_{n+1,3}	 			   & 0     \\
	B_{n+1,2}  	& C_{n+1,2}  			   & 0	\\
	C_{n+2,3}		& 0						   & 0     \\
} 
\m{x(n+2) \\ x(n+1) \\ x(n)}  
= \m{ f_1(n) \\ f_2(n) \\ f_3(n) \\  f_4(n) \\ \hline f_3(n+1) \\ f_2(n+1) \\  f_3(n+2) }, 
\end{equation}
for all $n\geq n_0$. Therefore, it suffices to prove that any solution to \eqref{eq2.9} is also a solution to \eqref{eq2.11} and vice versa. \\
\textbf{Necessity:} The main idea is to apply (only) two elementary row transformations below to system \eqref{eq2.9} to obtain \eqref{eq2.11}. 
	\begin{enumerate}
		\item[i)] scaling a block row equation with a nonsingular matrix,
		\item[ii)] adding to one row a linear combination of some other rows.
	\end{enumerate}
By scaling the first (resp., second) block row equation of \eqref{eq2.9} with an orthogonal matrix $\sm{S^{(2)}_{n} \\ Z^{(2)}_{n}}$ (resp., $\sm{S^{(1)}_{n} \\ Z^{(1)}_{n}}$), we obtain an equivalent system to \eqref{eq2.5}, as follows
	\begin{equation}\label{eqA1}
	\pm{ d_{2} \\ s_2 \\ \hline \\[-0.35cm] d_1 \\ s_1 \\ \hline \\[-0.35cm] r_0 \\ v \\ \hline \\[-0.35cm]  r_0 \\ r_1 \\ r_0 } \
	\m{
		S^{(2)}_{n} A_{n,1} & S^{(2)}_{n} B_{n,1}    & S^{(2)}_{n} C_{n,1} 	\\
		Z^{(2)}_{n} A_{n,1} & Z^{(2)}_{n} B_{n,1}    & Z^{(2)}_{n} C_{n,1}  \\ \hline \\[-0.35cm]
		0 			& S^{(1)}_{n} B_{n,2}  	 & S^{(1)}_{n} C_{n,2}  \\
		0 			& Z^{(1)}_{n} B_{n,2}    & Z^{(1)}_{n} C_{n,2} 	\\ \hline \\[-0.35cm]
		0			& 0				 & C_{n,3}     \\
		0 		    & 0              & 0 			\\	\hline 
		0			& C_{n+1,3}					    & 0		     \\
		B_{n+1,2}  	& C_{n+1,2}  			   		& 0	\\
		C_{n+2,3}	& 0				 			    & 0     		
	} \m{x(n+2) \\ x(n+1) \\ x(n)} \!=\! 
	\m{ S^{(2)}_{n} f_1(n) \\ Z^{(2)}_{n} f_1(n) \\ \hline \\[-0.35cm] S^{(1)}_{n} f_2(n) \\ Z^{(1)}_{n} f_2(n) \\ \hline \\[-0.35cm] f_3(n) \\ f_4(n) \\ \hline f_3(n+1) \\  f_2(n+1) \\ f_3(n+2)		}.
	\end{equation}

	By adding the seventh row scaled with $Z^{(3)}_{n}$ to the fourth row of \eqref{eqA1} and making use of \eqref{eq2.10a} we obtain the first hidden constraint
	\begin{equation*}
	Z^{(1)}_{n} C_{n,2} x(n) = Z^{(1)}_{n} f_2(n) + Z^{(3)}_{n} f_3(n+1),
	\end{equation*}
	which is exactly the fourth row of \eqref{eq2.11}.
	
	We continue by adding the seventh row scaled with $Z^{(4)}_{n}$ and the eighth row scaled with $Z^{(5)}_{n}$ to the second row of \eqref{eqA1} and making use of \eqref{eq2.10b} to obtain
	\begin{align*}
	&& \left( Z^{(2)}_{n} B_{n,1} + Z^{(4)}_{n} C_{n+1,2} \right) x(n+1) + Z^{(2)}_{n} C_{n,1} x(n) \notag \\ 
	&& = Z^{(2)}_{n} f_1(n) + Z^{(4)}_{n} f_2(n+1) + Z^{(5)}_{n} f_3(n+2).
	\end{align*}
	This is exactly the second row of \eqref{eq2.11}. Therefore, any solution to \eqref{eq2.5} is also a solution to \eqref{eq2.11}. \\
	\textbf{Sufficiency:} Let $x$ be an arbitrary solution to \eqref{eq2.11}. Thus, $x$ is also a solution to the shifted system
	\[
	\pm{ d_{2} \\ s_2 \\ \hline \\[-0.35cm] d_1 \\ s_1 \\ \hline \\[-0.35cm] r_0 \\ v \\ \hline \\[-0.35cm]  r_0 \\ r_0 } \ 
	\m{S^{(2)}_{n} A_{n,1} & S^{(2)}_{n} B_{n,1}      			   & S^{(2)}_{n} C_{n,1} 	\\
		0			& Z^{(2)}_{n} B_{n,1} + Z^{(4)}_{n} C_{n+1,2}  & Z^{(2)}_{n} C_{n,1}  \\ \hline \\[-0.35cm]
		0 			& S^{(1)}_{n} B_{n,2}  	   			   & S^{(1)}_{n} C_{n,2}  \\
		0 			& 0                			   & Z^{(1)}_{n} C_{n,2} 	\\ \hline \\[-0.35cm]
		0			& 0				 			   & C_{n,3}     \\
		0 		    & 0              			   & 0 		\\	\hline 
		0			& C_{n+1,3}					    & 0		     \\
		C_{n+2,3}	& 0				 			    & 0     	
	} 
	\m{x(n+2) \\ x(n+1) \\ x(n)} = 
	\]
	\begin{align}\label{eqA2}
	&= \m{ S^{(2)}_{n} f_1(n) \\ Z^{(2)}_{n} f_1(n) + Z^{(4)}_{n} f_2(n+1) + Z^{(5)}_{n} f_3(n+2) \\ \hline \\[-0.35cm] S^{(1)}_{n} f_2(n) \\ Z^{(1)}_{n} f_2(n) + Z^{(3)}_{n} f_3(n+1) \\ \hline f_3(n) \\  f_4(n) 
		\\ \hline f_3(n+1) \\  f_3(n+2)	}\ \mbox{ for all } n\geq n_0.
	\end{align}
	Since elementary matrix-row operations are reversible, we can reverse the transformations performed in the necessity part. Consequently, we see that any solution to \eqref{eqA2} is also a solution to \eqref{eqA1}, and hence, this completes the proof.
\end{pf}

\end{document}